\documentclass[11pt]{amsart}
\usepackage{amsmath,amssymb,amsfonts,amsthm,amsrefs}
\usepackage[margin=1in]{geometry}
\ifpdf
\usepackage{hyperref}
\else
\usepackage[hypertex]{hyperref}
\fi
\usepackage{graphicx}

\title{Comparing the degrees of enumerability and the closed Medvedev degrees}
\author[P.~Shafer]{Paul Shafer}
\address{School of Mathematics\\
University of Leeds\\
Leeds, LS2 9JT, United Kingdom}
\email{\href{mailto:p.e.shafer@leeds.ac.uk}{p.e.shafer@leeds.ac.uk}}
\urladdr{\url{http://www1.maths.leeds.ac.uk/~matpsh/}}
\thanks{The first author was supported by EPSRC Overseas Travel Grant
No.~EP/R006458/1}

\author[A.~Sorbi]{Andrea Sorbi}
\address{Dipartimento di Ingegneria Informatica e Scienze Matematiche\\
Universit\`a Degli Studi di Siena\\
I-53100 Siena, Italy}
\email{\href{mailto:andrea.sorbi@unisi.it}{andrea.sorbi@unisi.it}}
\urladdr{\url{http://www3.diism.unisi.it/~sorbi/}}
\thanks{The second author is a member of INDAM-GNSAGA}

\keywords{Medvedev degrees, enumeration degrees}

\subjclass[2010]{03D30}

\newtheorem{Theorem}{Theorem}
\newtheorem{Lemma}[Theorem]{Lemma}

\newtheorem{Proposition}[Theorem]{Proposition}

\theoremstyle{definition}
\newtheorem{Definition}[Theorem]{Definition}
\newtheorem{Remark}[Theorem]{Remark}

\newcommand{\la}{\langle}
\newcommand{\ra}{\rangle}
\newcommand{\join}{\vee}
\newcommand{\meet}{\wedge}
\newcommand{\orr}{\text{ or }}
\newcommand{\andd}{\text{ and }}
\newcommand{\imp}{\rightarrow}

\newcommand{\biimp}{\leftrightarrow}

\newcommand{\da}{{\downarrow}}

\newcommand{\smf}{\smallfrown}

\DeclareMathOperator{\graph}{graph}
\DeclareMathOperator{\dom}{dom}
\DeclareMathOperator{\leaves}{leaves}

\DeclareMathOperator{\ran}{ran}
\DeclareMathOperator{\dg}{deg}

\newcommand{\dnr}{\mathrm{DNR}}

\newcommand{\leqT}{\leq_{\mathrm T}}

\newcommand{\midT}{\mid_{\mathrm T}}

\newcommand{\degT}{\dg_{\mathrm T}}

\newcommand{\leqs}{\leq_{\mathrm s}}

\newcommand{\les}{<_{\mathrm s}}

\newcommand{\nleqs}{\nleq_{\mathrm s}}

\newcommand{\equivs}{\equiv_{\mathrm s}}
\newcommand{\nequivs}{\not\equiv_{\mathrm s}}
\newcommand{\degs}{\dg_{\mathrm s}}

\newcommand{\leqw}{\leq_{\mathrm w}}

\newcommand{\nleqw}{\nleq_{\mathrm w}}

\newcommand{\skp}{\diamond}

\newcommand{\leqe}{\leq_\mathrm{e}}
\newcommand{\nleqe}{\nleq_\mathrm{e}}

\newcommand{\equive}{\equiv_\mathrm{e}}

\newcommand{\gee}{>_\mathrm{e}}

\newcommand{\dege}{\dg_{\mathrm{e}}}

\newcommand{\MP}[1]{\ensuremath{\mathcal{#1}}}
\newcommand{\md}[1]{\ensuremath{\mathbf{#1}}}

\usepackage{xcolor}	
\usepackage{soul}


\AtBeginDocument{%
   \def\MR#1{}
}

\begin{document}

\begin{abstract}
We compare the degrees of enumerability and the closed Medvedev degrees and
find that many situations occur.  There are nonzero closed degrees that do
not bound nonzero degrees of enumerability, there are nonzero degrees of
enumerability that do not bound nonzero closed degrees, and there are
degrees that are nontrivially both degrees of enumerability and closed
degrees.  We also show that the compact degrees of enumerability exactly
correspond to the cototal enumeration degrees.
\end{abstract}

\maketitle

\section*{Introduction}

The purpose of this work is to explore the distribution of the so-called
\emph{degrees of enumerability} with respect to the closed degrees within the
Medvedev degrees.  Both the enumeration degrees and the Turing degrees embed
into the Medvedev degrees.  The Medvedev degrees corresponding to enumeration
degrees are called \emph{degrees of enumerability}, and the Medvedev degrees
corresponding to Turing degrees are called \emph{degrees of solvability}.
The embedding of the Turing degrees into the Medvedev degrees is particularly
nice.  The degrees of solvability are all closed (being the degrees of
singleton sets), and the collection of all degrees of solvability is
definable in the Medvedev degrees.  On the other hand, whether the degrees of
enumerability are definable in the Medvedev degrees is a longstanding open
question of Rogers~\cites{Rogers:Agenda, Rogers:Book}.

In light of Roger's question and the nice definability and topological
properties of the degrees of solvability, we find it natural to investigate
the behavior of the degrees of enumerability with respect to the closed
degrees.  Together, our main results show that the relation between the
degrees of enumerability and the closed degrees is considerably more nuanced
than the relation between the degrees of solvability and the closed degrees.

\begin{itemize}
\item There are nonzero closed degrees that do not bound nonzero degrees of
    enumerability.  In fact, there are nonzero degrees that are closed,
    uncountable, and meet-irreducible that do not bound nonzero degrees of
    enumerability (Proposition~\ref{prop-ClosedNotEnum}).

\smallskip

\item  There are nonzero closed (indeed, compact) degrees of enumerability
    that do not bound nonzero degrees of solvability
    (Theorem~\ref{thm:qm-compact}). Moreover, the compact degrees of
    enumerability exactly correspond to the cototal enumeration degrees
    (Theorem~\ref{thm:cototal=compact}).

\smallskip

\item There are nonzero degrees of enumerability that do not bound nonzero
    closed degrees (Theorem~\ref{thm:no-bounding}).
\end{itemize}

We work in Baire space and interpret an arbitrary set $\MP A \subseteq
\omega^\omega$ as representing an abstract mathematical problem, namely the
problem of finding (or, computing) a member of $\MP A$. For this reason, we
refer to subsets of Baire space as \emph{mass problems}. For sets $\MP{A},
\MP{B} \subseteq \omega^\omega$, we say that \emph{$\MP A$ Medvedev (or
strongly) reduces to $\MP B$}, and we write $\MP A \leqs \MP B$, if there is
a Turing functional $\Phi$ such that $\Phi(\MP B) \subseteq \MP A$, meaning
that $\Phi(f)$ is total and is in $\MP A$ for every $f \in \MP B$. Under the
interpretation of subsets of Baire space as mathematical problems, $\MP A
\leqs \MP B$ means that problem $\MP B$ is at least as hard as problem $\MP
A$ in a computational sense because every solution to problem $\MP B$ can be
converted into a solution to problem $\MP A$ by a uniform computational
procedure.

Medvedev reducibility induces an equivalence relation called \emph{Medvedev
(or strong) equivalence} in the usual way:  $\MP A \equivs \MP B$ if and only
if $\MP A \leqs \MP B$ and $\MP B \leqs \MP A$.  The $\equivs$-equivalence
class $\degs(\MP A) = \{\MP B : \MP B \equivs \MP A\}$ of a mass problem $\MP
A$ is called its \emph{Medvedev (or strong) degree}, and the collection of
all such equivalence classes, ordered by Medvedev reducibility, is a
structure called the \emph{Medvedev degrees}.  The Medvedev degrees form a
bounded distributive lattice (in fact, a Brouwer algebra), with least element
$\md 0 = \{\MP A : \text{$\MP A$ has a recursive member}\}$ and greatest
element $\md 1 = \{\emptyset\}$.  Joins and meets in the Medvedev degrees are
computed as follows:
\begin{align*}
\degs(\MP A) \join \degs(\MP B) &= \degs(\MP A \oplus \MP B)\\
\degs(\MP A) \meet \degs(\MP B) &= \degs(0^\smf\MP A \cup 1^\smf\MP B).
\end{align*}
For joins, $\MP A \oplus \MP B = \{f \oplus g : f \in \MP A \andd g \in \MP
B\}$, where $f \oplus g$ is the usual Turing join of $f$ and $g$:  $(f \oplus
g)(2n) = f(n)$ and $(f \oplus g)(2n+1) = g(n)$.  For meets, $0^\smf\MP A \cup
1^\smf\MP B$ is the set obtained by prepending $0$ to every function in $\MP
A$, prepending $1$ to every function in $\MP B$, and taking the union of the
resulting sets.

Under the  interpretation of mass problems as mathematical problems, problem
$\MP A \oplus \MP B$ corresponds to the problem of solving problem $\MP A$
\emph{and} solving problem $\MP B$, and problem $0^\smf\MP A \cup 1^\smf\MP
B$ corresponds to the problem of solving problem $\MP A$ \emph{or} solving
problem $\MP B$.  Medvedev introduced the structure that now bears his name
in \cite{Medvedev1955}.  For an introduction to the Medvedev degrees,
including its origins and motivation, see \cite{Rogers:Book}*{Chapter~13.7}.
For surveys on the Medvedev degrees and related topics, see
\cites{SorbiSurvey, HinmanSurvey}.  For recursive aspects of the Medvedev
degrees, see \cite{Dyment1976Rus}.  For algebraic aspects of the Medvedev
degrees and applications to intermediate logics, see for instance
\cites{Skvortsova:intuitionismEng, Sorbi:Brouwer, Kuyper-Medvedev}.

\subsection*{Notation}
We use the following notation and terminology regarding strings and trees.
Denote by $\omega^{<\omega}$ the set of all finite strings of natural
numbers, and denote by $2^{<\omega}$ the set of all finite binary strings.
For $\sigma \in \omega^{<\omega}$, $|\sigma|$ denotes the length of $\sigma$.
We denote the empty string by $\emptyset$.  For $\sigma, \tau \in
\omega^{<\omega}$, $\sigma \subseteq \tau$ means that $\sigma$ is an initial
segment of $ \tau$, and $\sigma^\smf\tau$ denotes the concatenation of
$\sigma$ and $\tau$. Similarly, for $ \sigma \in \omega^{<\omega}$ and $f \in
\omega^\omega$, $\sigma \subset f$ means that $ \sigma$ is an initial segment
of $f$, i.e., $(\forall n < |\sigma|)(f(n) = \sigma(n))$, and $\sigma^\smf f$
denotes the concatenation of $\sigma$ and $f$:
\begin{align*}
(\sigma^\smf f)(n) =
\begin{cases}
\sigma(n) & \text{if $n < |\sigma|$}\\
f(n-|\sigma|) & \text{if $n \geq |\sigma|$}.
\end{cases}
\end{align*}
If $\sigma \in \omega^{<\omega}$ and $f \in \omega^\omega$,
$\sigma \# f$ denotes the result of replacing the initial segment of $f$ of
length $|\sigma|$ by $\sigma$:
\begin{align*}
(\sigma \# f)(n) =
\begin{cases}
\sigma(n) & \text{if $n < |\sigma|$}\\
f(n) & \text{if $n \geq |\sigma|$}.
\end{cases}
\end{align*}
For $\sigma \in \omega^{<\omega}$ and $\MP A \subseteq \omega^\omega$, we
define $\sigma^\smf\MP A = \{\sigma^\smf f : f \in \MP A\}$ and $\sigma\#\MP
A = \{\sigma \# f : f \in \MP A\}$. Finally, for $\sigma \in
\omega^{<\omega}$ and $n \leq |\sigma|$, $\sigma\!\restriction\!n$ denotes
the initial segment $\la \sigma(0), \dots, \sigma(n-1) \ra$ of $\sigma$ of
length $n$. Similarly, for $f \in \omega^\omega$ and $n \in \omega$,
$f\!\restriction\!n$ denotes the initial segment $\la f(0), \dots, f(n-1)
\ra$ of $f$ of length $n$.

A \emph{tree} is a set $T \subseteq \omega^{<\omega}$ that is closed under
initial segments: $(\forall \sigma, \tau \in \omega^{<\omega})((\sigma
\subseteq \tau \andd \tau \in T) \imp \sigma \in T)$.  A node $\sigma$ in a
tree $T$ is a \emph{leaf} if there is no $\tau \supset \sigma$ with $\tau \in
T$.  A tree $T$ is \emph{finitely branching} if for every $\sigma \in T$
there are at most finitely many strings $\tau \in T$ with $|\tau| = |\sigma|
+ 1$.   A string $\sigma \in \omega^{<\omega}$ is \emph{bounded} by an $h \in
\omega^\omega$ (or \emph{$h$-bounded}) if $(\forall n < |\sigma|)(\sigma(n) <
h(n))$.  Likewise, a tree $T$ is \emph{$h$-bounded} if $(\forall \sigma \in
T)(\text{$\sigma$ is $h$-bounded})$.  For $b \in \omega$, $b$-bounded means
bounded by the function that is constantly $b$. An $f \in \omega^\omega$ is
an infinite path through a tree $T$ if $\forall n (f\!\restriction\!n \in
T)$. The subset of Baire space consisting of all infinite paths through a
tree $T$ is denoted by $[T]$.  The closed subsets of Baire space are exactly
those of the form $[T]$ for a tree $T$, and the compact subsets of Baire
space are exactly those of the form $[T]$ for a finitely branching tree $T$.

Throughout, we refer to a standard listing $(\Phi_e: e \in \omega)$ of all
Turing functionals on Baire space. If $\Phi$ is a Turing functional and
$\sigma$ is a finite string of natural numbers, then $\Phi(\sigma)$ denotes
the longest string $\tau$ such that $(\forall m < |\tau|)(\tau(m) =
\Phi(\sigma)(m)\da)$). We also refer to a standard listing $(\Psi_e: e \in
\omega)$ of all enumeration operators. If $\Psi$ is an enumeration operator
and $\phi$ is a partial function, then $\Psi(\phi)$ stands for
$\Psi(\graph(\phi))$.  Recall that $\la \cdot, \cdot \ra \colon \omega^2 \imp
\omega$ is the usual recursive Cantor pairing function and that $\graph(\phi)
= \{\la n, y \ra : n \in \dom(\phi) \andd \phi(n) = y\}$.

For further background concerning recursion theory, trees, and the topology
of Baire space, we refer the reader to standard textbooks such as
\cites{Rogers:Book, Moschovakis:Descriptive}.

\subsection*{Degrees of solvability and degrees of enumerability}
As discussed in the introduction, part of the interest in the Medvedev
degrees comes from the fact that the structure embeds both the Turing degrees
and the enumeration degrees.  Singleton subsets of Baire space are called
\emph{problems of solvability}, and their corresponding Medvedev degrees are
called \emph{degrees of solvability}.  It is easy to see that the assignment
$\degT(f) \mapsto \degs(\{f\})$ embeds the Turing degrees into the Medvedev
degrees, preserving joins and the least element, and that the range of this
embedding is exactly the degrees of solvability.  Moreover, the degrees of
solvability are definable in the Medvedev degrees \cites{Medvedev1955,
Dyment1976Rus} (see also \cites{SorbiSurvey, Rogers:Book}).

To embed the enumeration degrees into the Medvedev degrees, given a nonempty
$A \subseteq \omega$, let
\begin{align*}
\MP{E}_A = \{f : \ran(f) = A\}.
\end{align*}
$\MP{E}_A$ is called the \emph{problem of enumerability of $A$}, and it
represents the problem of enumerating the set $A$.  The corresponding
Medvedev degree $\md{E}_A = \degs(\MP{E}_A)$ is called the \emph{degree of
enumerability of $A$}.  For nonempty $A, B \subseteq \omega$, it is easy to
see that $A \leqe B$ if and only if $\MP{E}_A \leqs \MP{E}_B$.  This gives
rise to an embedding $\dege(A) \mapsto \md{E}_A$ of the enumeration degrees
into the Medvedev degrees.  The embedding preserves joins and the least
element, and the range of the embedding is exactly the degrees of
enumerability \cite{Medvedev1955} (see also \cites{SorbiSurvey,
Rogers:Book}). Again we mention that, contrary to definability of the degrees
of solvability, it is still an open question (see
Rogers~\cites{Rogers:Agenda, Rogers:Book}) whether the degrees of
enumerability are definable, or at least invariant under automorphisms, in
the Medvedev degrees.

The following lemma (which we state and prove for later reference) is
well-known.  It corresponds to the fact that the Turing degrees embed (again
via an embedding that preserves joins and the least element) into the
enumeration degrees of total functions.

\begin{Lemma}\label{lem-EnumTotGraph}
If $f \colon \omega \imp \omega$ is total, then $\MP{E}_{\graph(f)} \equivs
\{f\}$.
\end{Lemma}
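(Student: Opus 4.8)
The plan is to verify the two Medvedev reductions separately, each witnessed by a simple uniform Turing functional; the only place where the hypothesis that $f$ is total does any real work is in guaranteeing that the functional producing $f$ from an enumeration of $\graph(f)$ halts on every argument.

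For the reduction $\MP{E}_{\graph(f)} \leqs \{f\}$, I would define a functional $\Phi$ that, given oracle $f$, outputs the function $n \mapsto \la n, f(n)\ra$. Since $f$ is total, $\Phi(f)$ is total, and its range is exactly $\{\la n, f(n)\ra : n \in \omega\} = \graph(f)$, so $\Phi(f) \in \MP{E}_{\graph(f)}$. As $f$ is the only member of $\{f\}$, this single computation suffices to witness the reduction.

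For the reverse reduction $\{f\} \leqs \MP{E}_{\graph(f)}$, I would define a functional $\Psi$ that, given any oracle $g$, computes a function by setting $\Psi(g)(n)$ to be the second coordinate $y$ of the first pair $\la n, y \ra$ it locates in the range of $g$ with first coordinate $n$ (scanning $g(0), g(1), \dots$ in turn). I then need to check that $\Psi(g) = f$ for every $g$ with $\ran(g) = \graph(f)$. Here the totality and single-valuedness of $f$ are precisely what is needed: because $f$ is total, $\graph(f)$ contains a pair with first coordinate $n$ for every $n$, so each search terminates and $\Psi(g)$ is total; and because $f$ is a function, $\la n, y\ra \in \graph(f)$ forces $y = f(n)$, so whichever such pair the search finds yields the correct value. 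Hence $\Psi(g)(n) = f(n)$ for all $n$, giving $\Psi(g) = f \in \{f\}$.

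The step requiring the most care, though it remains routine, is confirming that $\Psi$ behaves correctly uniformly across all enumerations $g$ of $\graph(f)$: the functional has no access to $f$ and must recover $f(n)$ purely by scanning the arbitrarily ordered values of $g$. The observation that totality of $f$ both forces termination of each search and, via single-valuedness, pins down the unique correct output is the crux of the argument, and it is also what explains why the lemma is restricted to total $f$.
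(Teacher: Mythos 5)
Your proof is correct and matches the paper's argument essentially verbatim: the same functional $n \mapsto \la n, f(n)\ra$ for $\MP{E}_{\graph(f)} \leqs \{f\}$, and the same search-the-enumeration functional for the reverse direction, with totality of $f$ guaranteeing termination of each search. Your added remarks on single-valuedness and uniformity are a slightly more explicit spelling-out of what the paper leaves implicit, but the approach is identical.
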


\begin{proof}
Clearly $\MP{E}_{\graph(f)} \leqs \{f\}$ via the Turing functional
$\Phi(f)(n)=\langle n, f(n)\rangle$. To see that $\{f\} \leqs
\MP{E}_{\graph(f)}$, let $\Gamma$ be the Turing functional such that, for
every total $g \colon \omega \imp \omega$ and $n \in \omega$, $\Gamma(g)(n)$
searches for the least $k$ such that $g(k) = \la n, y \ra$ for some $y$, and
outputs $y$. Then $\Gamma(g) = f$ whenever $\ran(g) = \graph(f)$, so $\{f\}
\leqs \MP{E}_{\graph(f)}$.
\end{proof}

In analogy with the common terminology used in the enumeration degrees, we
say that a problem of enumerability $\MP E$ is \emph{total} if $\MP E \equivs
\{f\}$ for some total $f$.  That is, a problem of enumerability is total if
it is Medvedev-equivalent to a problem of solvability.  Likewise, we say that
a degree of enumerability is \emph{total} if it is the Medvedev degree of a
total problem of enumerability.  Now recall that an $A \subseteq \omega$ is
\emph{quasiminimal} if $A$ is not r.e.\ and there is no nonrecursive total
$f$ with $f \leqe A$ (meaning, as usual, that there is no nonrecursive total
$f$ with $\graph(f) \leqe A$). We say that a problem of enumerability $\MP E$
is \emph{quasiminimal} if $\MP E \equivs \MP{E}_A$ for a quasiminimal $A$.
Likewise, we say that a degree of enumerability is \emph{quasiminimal} if it
is the Medvedev degree of a quasiminimal problem of enumerability.
Lemma~\ref{lem-EnumTotGraph} implies the following lemma.

\begin{Lemma}\label{lem:withoutproof}
If $\MP{E}_A$ is a quasiminimal problem of enumerability, then $\md 0 \les
\md{E}_A$ and $\MP{E}_A \nequivs \{f\}$ for every total $f$ (in fact $\{f\}
\nleqs \MP{E}_A$ for every nonrecursive total $f$).
\end{Lemma}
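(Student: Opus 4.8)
The plan is to prove the parenthetical ``in fact'' clause first, since it is the strongest of the three assertions and the other two follow from it together with the observation that $A$ is not r.e. So I would aim for the key statement: for every nonrecursive total $f$, $\{f\} \nleqs \MP{E}_A$. Granting this, the remaining clauses come cheaply. For $\md 0 \les \md{E}_A$, note $\md 0 \leqs \md{E}_A$ is automatic since $\md 0$ is least, and the inequality is strict exactly when $\MP{E}_A$ has no recursive member; but a recursive member of $\MP{E}_A$ would be a total recursive $h$ with $\ran(h) = A$, making $A$ r.e. and contradicting quasiminimality. For $\MP{E}_A \nequivs \{f\}$ with $f$ total: if $f$ is recursive then $\degs(\{f\}) = \md 0 \les \md{E}_A = \degs(\MP{E}_A)$, and if $f$ is nonrecursive then the key statement gives $\{f\} \nleqs \MP{E}_A$; either way $\MP{E}_A \nequivs \{f\}$.

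To prove the key statement I would argue by contraposition, reducing it to quasiminimality via an enumeration reduction. Suppose $\{f\} \leqs \MP{E}_A$ for some total $f$, witnessed by a Turing functional $\Phi$, so that $\Phi(g) = f$ for every $g$ with $\ran(g) = A$. I claim $\graph(f) \leqe A$. Define an operator $\Psi$ by declaring $\la n, y \ra \in \Psi(X)$ iff there is a string $\sigma$ with $\ran(\sigma) \subseteq X$ and $\Phi(\sigma)(n)\da = y$; since the set of pairs $(\ran(\sigma), \la n, y \ra)$ with $\Phi(\sigma)(n)\da = y$ is r.e.\ (enumerate all $\sigma$ and simulate $\Phi$), this is a bona fide enumeration operator. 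The heart of the matter is to verify $\Psi(A) = \graph(f)$. For $\Psi(A) \subseteq \graph(f)$: if $\ran(\sigma) \subseteq A$ and $\Phi(\sigma)(n)\da = y$, extend $\sigma$ to some $g$ with $\ran(g) = A$ (possible since $A$ is nonempty: continue $\sigma$ by listing all of $A$); by the use principle $\Phi(g)(n) = \Phi(\sigma)(n) = y$, and $\Phi(g) = f$ gives $f(n) = y$. For $\graph(f) \subseteq \Psi(A)$: given $f(n) = y$, fix any $g$ with $\ran(g) = A$; then $\Phi(g)(n) = f(n) = y$ converges, so some initial segment $\sigma \subset g$ already forces $\Phi(\sigma)(n)\da = y$, and $\ran(\sigma) \subseteq \ran(g) = A$. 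Hence $\graph(f) \leqe A$, i.e.\ $f \leqe A$. If in addition $f$ is total and nonrecursive, this contradicts the quasiminimality of $A$, and the contrapositive is exactly the key statement.

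The main obstacle, and really the only nonroutine point, is the passage from ``$f$ is uniformly computable from \emph{every} enumeration of $A$'' to ``$\graph(f)$ is enumeration-reducible to $A$''. The crucial observation that makes the reduction correct is that \emph{any} finite string $\sigma$ with $\ran(\sigma) \subseteq A$ extends to a genuine enumeration of $A$, so that its converged $\Phi$-outputs are forced to agree with $f$; without this one could not rule out a string with range inside $A$ producing a spurious value. This is precisely what lets the single functional $\Phi$, evaluated on finite approximations, be repackaged as an enumeration operator. Everything else is bookkeeping with the use principle and the definitions of $\leqs$ and $\leqe$.
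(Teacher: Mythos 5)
Your proof is correct and follows essentially the route the paper intends: the lemma is stated as a consequence of Lemma~\ref{lem-EnumTotGraph} together with the earlier fact that $A \leqe B$ if and only if $\MP{E}_A \leqs \MP{E}_B$, which combine to show that $\{f\} \leqs \MP{E}_A$ forces $\graph(f) \leqe A$ --- exactly your key statement --- and your construction of the enumeration operator $\Psi$ from $\Phi$ (with the crucial observation that any $\sigma$ with $\ran(\sigma) \subseteq A$ extends to a genuine enumeration of $A$) is precisely the standard proof of that cited fact, inlined. Your handling of the remaining clauses (strictness of $\md 0 \les \md{E}_A$ because a recursive member of $\MP{E}_A$ would make $A$ r.e., and $\MP{E}_A \nequivs \{f\}$ by cases on whether $f$ is recursive) is likewise the intended argument.
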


Both the degrees of solvability and the degrees of enumerability enjoy the
algebraic property of \emph{meet-irreducibility}.  Recall that an element $a$
of a lattice $L$ is called \emph{meet-reducible} if it is the meet of a pair
of strictly larger elements:  $(\exists b,c  \in L)(b > a \andd c > a \andd a
= b \meet c)$.  An element of a lattice is called \emph{meet-irreducible} if
it is not meet-reducible.  It is well-known that, in a distributive lattice
$L$ such as the Medvedev degrees, an element $a$ is meet-irreducible if and
only if $(\forall b,c \in L)((a \geq b \meet c) \imp (a \geq b \orr a \geq
c))$ (see \cite{BalbesDwinger}*{Section III.2}).

We now recall some helpful terminology and a lemma before proving that the
degrees of solvability and the degrees of enumerability are meet-irreducible.
These facts are known in the literature, but we include proofs for the sake
of completeness.  For a mass problem $\MP A$ and a $\sigma \in
\omega^{<\omega}$, let $\MP{A}_\sigma=\{f \in \MP{A} : \sigma \subset f\}$.
Call a mass problem $\MP A$ \emph{uniform} if $\MP{A}_\sigma \leqs \MP{A}$
whenever $\sigma \in \omega^{<\omega}$ is such that $\sigma \subset f$ for
some $f \in \MP A$.

\begin{Lemma}\cite{Dyment1976Rus}*{Corollary 2.8}\label{lem:uniform-meet-irr}
Every uniform mass problem has meet-irreducible Medvedev degree.
\end{Lemma}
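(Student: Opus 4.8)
The plan is to verify the distributive-lattice criterion for meet-irreducibility recalled just above the statement: writing $a = \degs(\MP A)$, it suffices to show that for all Medvedev degrees $b, c$ we have $(a \geq b \meet c) \imp (a \geq b \orr a \geq c)$. So fix mass problems $\MP B, \MP C$ with $b = \degs(\MP B)$ and $c = \degs(\MP C)$. By the meet formula, $a \geq b \meet c$ means precisely that $0^\smf\MP B \cup 1^\smf\MP C \leqs \MP A$, so I fix a Turing functional $\Phi$ witnessing this: for every $f \in \MP A$, the function $\Phi(f)$ is total and lies in $0^\smf\MP B \cup 1^\smf\MP C$. In particular $\Phi(f)(0) \in \{0,1\}$, with $\Phi(f)(0) = 0$ forcing the tail of $\Phi(f)$ to lie in $\MP B$ and $\Phi(f)(0) = 1$ forcing it into $\MP C$. (If $\MP A = \emptyset$ then $a = \md 1$ and $a \geq b$ holds trivially, so I assume $\MP A \neq \emptyset$.)

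The key observation is that the value $\Phi(f)(0)$ is already determined by a finite initial segment of $f$: for each $f \in \MP A$ there is some $\sigma \subset f$ with $\Phi(\sigma)(0)\da$, and then $\Phi(g)(0) = \Phi(\sigma)(0)$ for every $g \in \MP A$ with $g \supset \sigma$. I then split into two cases according to whether any such initial segment forces the output to begin with $0$. In \textbf{Case 1}, some $\sigma$ that is an initial segment of some $f \in \MP A$ satisfies $\Phi(\sigma)(0) = 0$. Then every $g \in \MP{A}_\sigma$ has $\Phi(g)(0) = 0$, so $\Phi(g) = 0^\smf h$ for some $h \in \MP B$; the functional $\Phi'$ given by $\Phi'(g)(n) = \Phi(g)(n+1)$ thus witnesses $\MP B \leqs \MP{A}_\sigma$. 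Invoking uniformity gives $\MP{A}_\sigma \leqs \MP A$ (legitimately, since $\sigma \subset f$ for some $f \in \MP A$), and composing the two reductions yields $\MP B \leqs \MP A$, i.e.\ $a \geq b$. In \textbf{Case 2}, no such $\sigma$ exists, so for every $f \in \MP A$ the (defined) first output value is never $0$ and hence equals $1$; thus $\Phi(f) = 1^\smf h$ for some $h \in \MP C$, and deleting the leading $1$ as above witnesses $\MP C \leqs \MP A$ directly, i.e.\ $a \geq c$.

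The crux of the argument, and the only place where uniformity is needed, is Case 1: a priori the ``branch'' taken by $\Phi(f)$ could depend on $f$, so there need not be a single functional sending all of $\MP A$ into $\MP B$ or all of $\MP A$ into $\MP C$. The device that resolves this is localization: once some initial segment $\sigma$ pins the first output value to $0$, the subproblem $\MP{A}_\sigma$ is routed entirely into $0^\smf\MP B$, and uniformity is exactly what lets me trade the harder problem $\MP{A}_\sigma$ back for $\MP A$ at no cost. In Case 2 no localization is required, because the absence of any $0$-forcing initial segment means the whole of $\MP A$ is uniformly routed into $1^\smf\MP C$. I expect the verification that $\Phi(g)(0)$ is genuinely constant on $\MP{A}_\sigma$ (so that the tail functional is well defined on all of $\MP{A}_\sigma$) to be the one point deserving care, but it follows immediately from the determination of $\Phi(\sigma)(0)$ by $\sigma$ alone.
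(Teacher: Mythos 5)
Your proof is correct and takes essentially the same approach as the paper: both arguments localize to the subproblem $\MP{A}_\sigma$ for a string $\sigma$ that decides the value $\Phi(\cdot)(0)$, and both use uniformity exactly to trade $\MP{A}_\sigma$ back for $\MP A$. The only (inessential) difference is your asymmetric case split: the paper picks any $f \in \MP A$ and any deciding $\sigma \subset f$ and cases on the value $\Phi(\sigma)(0)$, invoking uniformity in both cases, whereas your Case 2 observes that when no $\sigma$ forces output $0$ the whole of $\MP A$ is routed into $1^\smf\MP C$ and uniformity can be skipped there.
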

\begin{proof}
Suppose that $\MP A$ is a uniform mass problem and that $\MP B$ and $\MP C$
are arbitrary mass problems such that $0^\smf\MP B \cup 1^\smf\MP C \leqs \MP
A$. We may assume that $\MP A \neq \emptyset$ as clearly $\md 1$ is
meet-irreducible. Let $\Phi$ be such that $\Phi(\MP A) \subseteq 0^\smf\MP B
\cup 1^\smf\MP C$. Choose any $f \in \MP A$, and let $ \sigma \subset f$ be
such that $\Phi(\sigma)(0)\da$. Let $b = \Phi(\sigma)(0)$, and observe that
$b \in \{0,1\}$.  Suppose for the sake of argument that $b = 0$. Then, as
every $f \in \MP{A}_\sigma$ begins with $\sigma$ and is in $\MP{A}$, we have
that $\Phi(f)(0)=0$ for every $f \in \MP{A}_\sigma$, thus yielding $\MP B
\leqs 0^\smf\MP B \leqs \MP A_\sigma \leqs \MP A$. Similarly, if $b = 1$,
then $\MP C \leqs \MP A$.  Thus either $\MP B \leqs \MP A$ or $\MP C \leqs
\MP A$. So $\MP A$ has meet-irreducible degree.
\end{proof}

\begin{Proposition}[\cites{Medvedev1955, Sorbi:Someremarks}]\label{prop-MeetIrred}
In the Medvedev degrees, every degree of solvability is meet-irreducible, and
every degree of enumerability is meet-irreducible.
\end{Proposition}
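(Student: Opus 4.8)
The plan is to reduce both claims to Lemma~\ref{lem:uniform-meet-irr} by showing that problems of solvability and problems of enumerability are both \emph{uniform} mass problems; meet-irreducibility of their degrees is then immediate. So the whole task is to verify the defining inequality $\MP{A}_\sigma \leqs \MP A$ (for the relevant $\sigma$) in each of the two cases.

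For the degrees of solvability, I would take a singleton $\{f\}$ and consider any $\sigma \in \omega^{<\omega}$ with $\sigma \subset g$ for some $g \in \{f\}$. The only candidate for $g$ is $f$ itself, so the hypothesis forces $\sigma \subset f$, and hence $\{f\}_\sigma = \{f\}$. Thus $\{f\}_\sigma \leqs \{f\}$ holds trivially via the identity functional, so $\{f\}$ is uniform and $\degs(\{f\})$ is meet-irreducible.

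For the degrees of enumerability, I would fix a nonempty $A \subseteq \omega$ and the problem $\MP{E}_A = \{f : \ran(f) = A\}$, then take $\sigma \in \omega^{<\omega}$ with $\sigma \subset f$ for some $f \in \MP{E}_A$. Since $\ran(f) = A$, every value of $\sigma$ lies in $A$, i.e. $\ran(\sigma) \subseteq A$. The claim is that the Turing functional $\Phi(g) = \sigma^\smf g$ witnesses $(\MP{E}_A)_\sigma \leqs \MP{E}_A$. Indeed, if $g \in \MP{E}_A$, then $\ran(\sigma^\smf g) = \ran(\sigma) \cup \ran(g) = \ran(\sigma) \cup A = A$ because $\ran(\sigma) \subseteq A$, while $\sigma \subset \sigma^\smf g$ by construction; hence $\sigma^\smf g \in (\MP{E}_A)_\sigma$. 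Therefore $\MP{E}_A$ is uniform, and $\md{E}_A$ is meet-irreducible.

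I do not expect a serious obstacle here. The only point that warrants a moment's care is the observation that the hypothesis ``$\sigma \subset f$ for some $f \in \MP{E}_A$'' genuinely yields $\ran(\sigma) \subseteq A$, since this is exactly what guarantees that prepending $\sigma$ does not disturb the range condition. Everything else amounts to checking that the two displayed functionals are total and map into the intended target problems, which is routine.
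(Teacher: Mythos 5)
Your proof is correct and takes essentially the same route as the paper: both reduce the statement to Lemma~\ref{lem:uniform-meet-irr} by verifying that $\MP{E}_A$ is uniform via the functional $\Phi(g)=\sigma^\smf g$. The only immaterial difference is that you check uniformity of the singleton $\{f\}$ directly, whereas the paper dispenses with the solvability case by noting that every degree of solvability is also a degree of enumerability.
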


\begin{proof}
It suffices to prove that the degrees of enumerability are meet-irreducible
(see \cite{Sorbi:Someremarks}*{Theorem~4.5}) because every degree of
solvability is also a degree of enumerability.  (It is also easy to simply
observe that if $0^\smf\MP B \cup 1^\smf\MP C \leqs \{f\}$, then either $\MP
B \leqs \{f\}$ or $\MP C \leqs \{f\}$.)

Let $\md{E}_A$ be the degree of enumerability of $A \subseteq \omega$.  The
proposition follows from Lemma~\ref{lem:uniform-meet-irr}, as it is easy to
see that $\MP{A}=\MP{E}_A$ is uniform: if $\sigma \subset f$ for some $f \in
\MP{E}_A$, just consider the reduction $\MP{A}_\sigma \leqs \MP{A}$ given by
$\Phi(f)=\sigma^\smf f$.
\end{proof}

In a similar spirit, Dyment proved that if $\MP B$ is a countable (or finite)
mass problem, if $ \MP{E}_A$ is the problem of enumerability of $A \subseteq
\omega$, and if $\MP B \leqs \MP{E} _A$, then there is a $g \in \MP B$ such
that $g \leqe A$ \cite{Dyment1976Rus}*{Theorem 3.4}.  Call a Medvedev degree
\emph{countable} if it is the degree of a countable (or finite) mass problem,
and call it \emph{uncountable} otherwise.  Dyment's result implies that if
$\md E$ is a nontotal degree of enumerability, then $\md E$ is uncountable
\cite{Dyment1976Rus}*{Corollary 3.14}.

\section*{Comparing degrees of enumerability and closed degrees}

A Medvedev degree is called \emph{closed} if it is of the form $\degs(\MP C)$
for a closed $\MP C \subseteq \omega^\omega$.  Every degree of solvability is
closed because singletons are closed.  Thus there are closed degrees of
enumerability because every degree of solvability is also a degree of
enumerability.  It is, however, easy to produce examples of closed degrees
that are not degrees of enumerability.  Let $f,g \in \omega^\omega$ be such
that $f \midT g$.  Then $ \degs(\{f,g\})$ is closed, but it is not a degree
of enumerability because it is meet-reducible (as $ \degs(\{f,g\}) =
\degs(\{f\}) \meet \degs(\{g\})$), whereas all degrees of enumerability are
meet- irreducible by Proposition~\ref{prop-MeetIrred}.  In fact, by the
discussion following Proposition~\ref{prop-MeetIrred}, we know that a degree
of enumerability must be meet-irreducible and either total (i.e., a degree of
solvability) or uncountable.  This begs the question of whether there are
Medvedev degrees that are closed, meet-irreducible, and uncountable, yet not
degrees of enumerability.  We show that the Medvedev degree of the $\{0,1\}
$-valued diagonally nonrecursive functions is such a degree.

Recall that $f \in \omega^\omega$ is \emph{diagonally nonrecursive} (DNR for
short) if $\forall e(\Phi_e(e)\da \imp f(e) \neq \Phi_e(e))$.  Let $\dnr_2 =
\{f \in 2^\omega : \text{$f$ is DNR}\}$.

\begin{Lemma}\label{lem-TreeEnum}
Let $T \subseteq \omega^{<\omega}$ be an infinite $h$-bounded tree for some
$h \in \omega^\omega$.  If $A \subseteq \omega$ is such that $\MP{E}_A \leqs
[T]$, then $A$ is r.e.\ in $T \oplus h$.
\end{Lemma}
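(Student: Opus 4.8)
The plan is to extract, from the reduction $\Phi$ witnessing $\MP{E}_A \leqs [T]$, a finite amount of data along $T$ that certifies membership in $A$, and then to observe that collecting this data is an r.e.-in-$(T \oplus h)$ process. Fix a Turing functional $\Phi$ with $\Phi([T]) \subseteq \MP{E}_A$, so that for every $f \in [T]$ the value $\Phi(f)$ is total and satisfies $\ran(\Phi(f)) = A$. Since $T$ is infinite and $h$-bounded, it is finitely branching, so by K\"onig's lemma $[T] \neq \emptyset$ and is in fact compact; this compactness is what will let me replace a quantifier over the paths of $[T]$ by a quantifier over finite levels of $T$.

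Say that a string $\sigma \in \omega^{<\omega}$ \emph{captures} $n$ if $n \in \ran(\Phi(\sigma))$, where $\Phi(\sigma)$ is the finite output string defined above. This relation is r.e.\ uniformly in $(\sigma, n)$, and it is monotone in the sense that if $\sigma$ captures $n$ and $\sigma \subseteq \tau$, then $\tau$ captures $n$. For each $n$, let $S_n = \{\sigma \in T : \sigma \text{ does not capture } n\}$. By monotonicity $S_n$ is closed under initial segments, so it is an $h$-bounded subtree of $T$. The key claim is that
\[
n \in A \iff [S_n] = \emptyset \iff S_n \text{ is finite} \iff (\exists \ell)(\forall \sigma \in T)(|\sigma| = \ell \imp \sigma \text{ captures } n),
\]
where the middle equivalence is K\"onig's lemma applied to the finitely branching tree $S_n$, and the last holds because a finitely branching tree is finite exactly when some level is empty.

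To prove the first equivalence, suppose first that $n \in A$ and, for contradiction, that some $f \in [S_n] \subseteq [T]$ exists. Then $\Phi(f)$ is total with $n \in \ran(\Phi(f))$, so $\Phi(f)(m) = n$ for some $m$ with finite use, giving an initial segment $\sigma \subset f$ that captures $n$; but $\sigma \in S_n$ since $f \in [S_n]$, a contradiction, so $[S_n] = \emptyset$. Conversely, if $S_n$ is finite, fix any $f \in [T]$ and any level $\ell$ with no length-$\ell$ node of $S_n$; then $f\!\restriction\!\ell \in T \setminus S_n$ captures $n$, so $n \in \ran(\Phi(f\!\restriction\!\ell)) \subseteq \ran(\Phi(f)) = A$.

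It remains to read off the complexity from the rightmost characterization. Given $\ell$, the oracle $h$ computes the finite set of $h$-bounded strings of length $\ell$, and the oracle $T$ decides which of them lie in $T$; for each such $\sigma \in T$, the event ``$\sigma$ captures $n$'' is r.e. Thus $(\forall \sigma \in T)(|\sigma| = \ell \imp \sigma \text{ captures } n)$ is a finite conjunction of r.e.\ events and hence r.e.\ in $T \oplus h$, and existentially quantifying over $\ell$ preserves this. Therefore $A$ is r.e.\ in $T \oplus h$. The one point requiring care---and the crux of the argument---is that ``captures'' is only r.e.\ and not decidable; compactness ($h$-boundedness together with K\"onig's lemma) is exactly what guarantees that when $n \in A$ the relevant level $\ell$ exists and that on that level the finitely many capturing events all eventually fire, so that the resulting enumeration is correct.
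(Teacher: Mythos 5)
Your proof is correct and is essentially the paper's argument: your set $S_n$ of non-capturing nodes is exactly the subtree $S = \{\sigma \in T : n \notin \ran(\Phi(\sigma))\}$ the paper applies K\"onig's lemma to, and your level-$\ell$ characterization of $A$ is precisely the paper's set $B$, which it enumerates from $T \oplus h$ in the same way. The only difference is presentational---you make the monotonicity of $n \in \ran(\Phi(\sigma))$ and the chain of equivalences explicit, where the paper leaves them implicit.
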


\begin{proof}
Let $\Phi$ be such that $\Phi([T]) \subseteq \MP{E}_A$. Using $T \oplus h$ as
an oracle, enumerate the set
\begin{align*}
B = \{n : \exists k(\forall\,\text{$h$-bounded
      $\sigma$ with $|\sigma|=k$})(\sigma \in T \imp n
      \in \ran(\Phi(\sigma)))\}.
\end{align*}
We show that $B = A$, thus showing that $A$ is r.e.\ in $T \oplus h$.

Suppose that $n \in B$.  Let $k$ be such that $n \in \ran(\Phi(\sigma))$
whenever $\sigma \in T$ has length $k$.  Let $f \in [T]$. Then
$f\!\restriction\!k \in T$, so $n \in \ran(\Phi(f\!\restriction\!k))$.
However, $\ran(\Phi(f)) = A$ because $\Phi(f) \in \MP{E}_A$, so it must be
that $n \in A$.  Hence $B \subseteq A$.

Now suppose that $n \notin B$. Then for every $k$ there is
an $h$-bounded $\sigma$ of length $k$ with $\sigma \in T$ but $n \notin
\ran(\Phi(\sigma))$. So the subtree $S \subseteq T$ given by $S = \{\sigma
\in T : n \notin \ran(\Phi(\sigma))\}$ is infinite. By K\"{o}nig's lemma,
there is a path $f \in [S] \subseteq [T]$. However, $n \notin
\ran(\Phi(f))=A$, giving $n \notin A$ as desired.
\end{proof}

In the next proposition, our proof that $\degs(\dnr_2)$ is uncountable relies
on the following fact.  If $\MP{A} \subseteq 2^\omega$ is a nonempty
$\Pi^0_1$ class with no recursive member and $\MP B$ is a countable mass
problem with no recursive member, then $\MP B \nleqs \MP A$.  This fact
follows immediately from \cite{Jockusch- Soare:Pi01}*{Theorem~2.5}, which
essentially states that such an $\MP A$ must in fact have continuum-many
members that are all pairwise Turing incomparable and also all Turing
incomparable with all members of $\MP B$.  So in fact $\MP B \nleqw \MP A$,
where $\leqw$ is \emph{Muchnik reducibility}:  $\MP X \leqw \MP Y$ if
$(\forall g \in \MP{Y})(\exists f \in \MP{X})(f \leqT g)$.  That $\MP B
\nleqs \MP A$ can also be deduced from the well-known fact that the image of
a recursively bounded $\Pi^0_1$ class under a Turing functional is another
recursively bounded $\Pi^0_1$ class (see \cite{Simpson}*{Theorem~4.7}), which
is easier to prove than~\cite{Jockusch-Soare:Pi01}*{Theorem~2.5}.  Suppose
for a contradiction that $\MP B \leqs \MP A$ via the Turing functional
$\Phi$.  Then $\MP{B}_0 = \Phi(\MP A) \subseteq \MP B$ is a countable
recursively bounded $\Pi^0_1$ class and therefore must have a recursive
member, contradicting that $\MP B$ has no recursive member. This argument can
also be used to show that $\MP B \nleqw \MP A$ because if $\MP B \leqw \MP
A$, then (see \cite{Simpson}*{Lemma~6.9}) there is a nonempty $\Pi^0_1$ class
$\MP{A}_0 \subseteq \MP A$ such that $\MP B \leqs \MP{A}_0$, and then the
argument can be repeated with $\MP{A}_0$ in place of $\MP A$.

\begin{Proposition}\label{prop-ClosedNotEnum}
The Medvedev degree $\degs(\dnr_2)$ is closed, meet-irreducible, and
uncountable, yet also not a degree of enumerability (in fact, it does not
bound any nonzero degree of enumerability).
\end{Proposition}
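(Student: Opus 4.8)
```latex
The plan is to establish each of the four claimed properties of $\degs(\dnr_2)$ in turn, with the bounding claim being the crux. First I would dispense with the topological and lattice-theoretic properties. The class $\dnr_2 \subseteq 2^\omega$ is a nonempty $\Pi^0_1$ class, hence closed, so $\degs(\dnr_2)$ is closed. For meet-irreducibility I would appeal to Lemma~\ref{lem:uniform-meet-irr} by checking that $\dnr_2$ is uniform: if $\sigma \subset f$ for some DNR $f \in 2^\omega$, then the reduction $\Phi(f) = \sigma \# f$ sends any DNR function $f$ to a function agreeing with $\sigma$ on its domain while remaining DNR (one must verify that overwriting an initial segment of a DNR function by a consistent $\sigma$ preserves the DNR property on the tail, which holds because $\sigma$ itself extends to DNR functions). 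Thus $(\dnr_2)_\sigma \leqs \dnr_2$, so $\dnr_2$ is uniform and $\degs(\dnr_2)$ is meet-irreducible.

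Next I would argue uncountability using the fact quoted in the paragraph preceding the proposition. Since $\dnr_2$ is a nonempty $\Pi^0_1$ class with no recursive member, the cited result of Jockusch--Soare guarantees that no countable mass problem $\MP B$ with no recursive member satisfies $\MP B \leqs \dnr_2$. If $\degs(\dnr_2)$ were countable, it would equal $\degs(\MP B)$ for some countable $\MP B$; then $\MP B \leqs \dnr_2$, and since $\dnr_2$ has no recursive member, neither does $\MP B$ (any recursive member of $\MP B$ would, via $\dnr_2 \leqs \MP B$, produce a recursive DNR function). This contradicts the cited fact, so $\degs(\dnr_2)$ is uncountable.

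The main obstacle is the final, strongest claim: that $\degs(\dnr_2)$ bounds no nonzero degree of enumerability. Suppose $\md{E}_A \leqs \degs(\dnr_2)$ for some nonempty $A \subseteq \omega$; I want to conclude $\md 0 = \md{E}_A$, i.e.\ that $A$ is r.e. Here is where Lemma~\ref{lem-TreeEnum} does the work. The class $\dnr_2 = [T]$ where $T = \{\sigma \in 2^{<\omega} : (\forall e < |\sigma|)(\Phi_e(e)\!\da \imp \sigma(e) \neq \Phi_e(e))\}$ is an infinite $h$-bounded tree with $h$ the constant function $2$. However, $T$ is not recursive and $h$ is recursive, so Lemma~\ref{lem-TreeEnum} only yields that $A$ is r.e.\ in $T \oplus h \equivT T$, which is $\Pi^0_1$ but not recursive. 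This is insufficient: being r.e.\ relative to a noncomputable oracle does not make $A$ r.e.

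To overcome this I would replace $T$ by a recursive tree presenting the same class, or exploit that $\dnr_2$ is a $\Pi^0_1$ class of a particularly uniform kind. The standard move is to observe that $\dnr_2$ equals $[T']$ for a \emph{recursive} subtree $T'$ of $2^{<\omega}$: although the canonical $T$ above is only $\Pi^0_1$, the class $\dnr_2$ admits a recursive bounded tree representation because one can take $T'$ to consist of those $\sigma \in 2^{<\omega}$ such that $\sigma(e) \neq \Phi_{e,|\sigma|}(e)$ for all $e < |\sigma|$ with $\Phi_{e,|\sigma|}(e)\!\da$, using the stage-$|\sigma|$ approximation; this $T'$ is recursive, $2$-bounded, infinite, and has $[T'] = \dnr_2$. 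Applying Lemma~\ref{lem-TreeEnum} to $T'$ with $h \equiv 2$ now gives that $A$ is r.e.\ in $T' \oplus h$, which is recursive, so $A$ is r.e.\ outright. Hence $\md{E}_A = \md 0$, completing the proof that $\degs(\dnr_2)$ bounds no nonzero degree of enumerability. The delicate point to get right is the passage from the $\Pi^0_1$ definition of $\dnr_2$ to a genuinely recursive bounded-tree presentation, since Lemma~\ref{lem-TreeEnum} is only useful when $T \oplus h$ is recursive.
```
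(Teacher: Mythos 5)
Your proposal is correct and follows essentially the same route as the paper: uniformity of $\dnr_2$ via $\Phi(f) = \sigma \# f$ for meet-irreducibility, the Jockusch--Soare fact for uncountability, and Lemma~\ref{lem-TreeEnum} applied to a recursive $2$-bounded tree presenting $\dnr_2$ for the non-bounding claim. The stage-bounded recursive tree $T'$ you construct to repair the na\"ive $\Pi^0_1$ presentation is exactly the tree the paper uses from the outset (its definition already requires that $\Phi_e(e)$ halt within $|\sigma|$ steps), so the ``delicate point'' you flag is built into the paper's definition rather than being an extra step.
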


\begin{proof}
It is well-known that $\dnr_2$ is a $\Pi^0_1$ class because $\dnr_2 = [T]$
for the recursive tree
\begin{align*}
T = \{\sigma \in 2^{<\omega} : (\forall e < |\sigma|)(\text{$\Phi_e(e)$
     halts within $|\sigma|$ steps} \imp \sigma(e) \neq \Phi_e(e))\}.
\end{align*}
By the above discussion, if $\MP B$ is a countable mass problem with no
recursive member, then $\MP B \nleqs \dnr_2$. Hence $\degs(\dnr_2)$ is
uncountable.  That $\degs(\dnr_2)$ is meet-irreducible follows from
Lemma~\ref{lem:uniform-meet-irr}, as it is easy to see that $\MP A = \dnr_2$
is uniform.  If $\sigma \subset f$ for an $f$ in $\dnr_2$, consider the
reduction procedure $\MP{A}_\sigma \leqs \MP{A}$ given by $\Phi(f)=\sigma \#
f$.

That $\degs(\dnr_2)$ is not a degree of enumerability follows from
Lemma~\ref{lem-TreeEnum}. We know that $\dnr_2 = [T]$ for a recursive tree $T
\subseteq 2^{<\omega}$.  Thus if $\MP{E}_A \leqs \dnr_2$ for some $A
\subseteq \omega$, then $A$ would have to be r.e.\ by Lemma~\ref{lem-TreeEnum}.
However, if $A$ is r.e., then $\MP{E}_A$ would have a recursive member, in
which case $\dnr_2 \nleqs \MP{E}_A$.  Thus there is no $A$ such that $\dnr_2
\equivs \MP{E}_A$.  In fact $\dnr_2$ does not bound any nonzero degree of
enumerability.
\end{proof}

If  $\md{E}_A\ne \mathbf{0}$ and $\md{E}_A$ is not quasiminimal, then there
are nonrecursive functions $f$ such that $\{f\} \leqs \MP{E}_{A}$, so
$\md{E}_A$ bounds some nonzero closed degree. As observed in
\cite{Lewis-Shore-Sorbi}, there are also quasiminimal degrees of
enumerability $\md{E}_A$ that bound nonzero closed degrees. Given an
infinite set $A$, consider the mass problem
\begin{align*}
\MP{C}_A = \{f : \text{$f$ is one-to-one and
$\ran(f) \subseteq A$}\}.
\end{align*}

As observed in~\cite{BianchiniSorbi}, $\MP{C}_A$ is closed, $\degs(\MP{C}_A)
\leqs \md{E}_A$, and, if $A$ is immune (meaning that $A$ has no infinite
r.e.\ subset), then $\degs(\MP{C}_A) \neq \md 0$.  So, if $A$ is immune and
of quasiminimal e-degree (which is the case, for instance, if $A$ is a
$1$-generic set, see \cite{Copestake:1-generic}), then we have a quasiminimal
degree of enumerability which bounds a nonzero closed degree. On the other
hand, if $A$ contains an infinite set $B$ such that $A \nleqe B$, then
$\md{E}_A \nleqs \degs(\MP{C}_A)$ because in this case $\MP{C}_A \leqs
\MP{E}_B$ (as $\MP{C}_B \subseteq \MP{C}_A$) but $\MP{E}_A \nleqs \MP{E}_B$.
This gives examples of sets $A$, even of total e-degree, for which
$\mathbf{0}<_\textrm{s} \degs(\MP{C}_A) <_\textrm{s} \md{E}_A$.

\begin{Proposition}
There is a total $f \colon \omega \imp \omega$ such that
$\degs{(\MP{C}_{\graph(f)})} \neq \mathbf{0}$
and $\MP{C}_{\graph(f)} <_\textrm{s} \MP{E}_{\graph(f)}$.
\end{Proposition}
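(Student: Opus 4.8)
The plan is to reduce the proposition to the two sufficient conditions highlighted in the preceding discussion and then to produce a single total function meeting both. Recall that for infinite $A$ one always has $\degs(\MP{C}_A) \leqs \md{E}_A$, that $A$ immune gives $\degs(\MP{C}_A) \neq \md 0$, and that the existence of an infinite $B \subseteq A$ with $A \nleqe B$ gives $\MP{E}_A \nleqs \degs(\MP{C}_A)$. So it suffices to build a total $f$ whose graph $A = \graph(f)$ is immune and admits such a $B$. I would take $B = \graph(f \restriction E)$, where $E = \{2n : n \in \omega\}$, and set $g_0(n) = f(2n)$ and $g_1(n) = f(2n+1)$, so that $g_0,g_1$ are total and $f$ is recoverable from the pair $(g_0,g_1)$ by the computable splitting.

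The first thing to pin down is the translation of $A \nleqe B$ into a Turing-theoretic condition on $f$. Since $B \equive \graph(g_0)$ has total enumeration degree $\degT(g_0)$ and $A$ is the graph of a total function, one checks that $A \leqe B$ holds iff $\graph(f)$ is r.e.\ in $g_0$, iff $f \leqT g_0$, iff $g_1 \leqT g_0$. Hence the requirement ``$A$ has an infinite subset $B$ with $A \nleqe B$'' is implied by ``$g_1 \nleqT g_0$''. The proposition therefore follows once I exhibit a total $f$ such that (a) $\graph(f)$ is immune and (b) $g_1 \nleqT g_0$.

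For the construction I propose to take $f \in \omega^\omega$ sufficiently generic for Cohen forcing with finite initial segments (a $1$-generic $f$ suffices); such an $f$ is total because the conditions forcing the domain to grow are dense. For immunity, for each $e$ consider the r.e.\ set of conditions $D_e = \{p : (\exists \langle m,y\rangle \in W_e)(m < |p| \andd p(m) \neq y)\}$. If $W_e$ is infinite then $D_e$ is dense (extend any $p$ to some $q$ with $q(m) \neq y$ for a pair $\langle m,y\rangle \in W_e$ with $m \geq |p|$), so a generic $f$ meets $D_e$ and thereby witnesses $W_e \not\subseteq \graph(f)$; finite $W_e$ pose no threat. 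Thus $\graph(f)$ has no infinite r.e.\ subset. For (b), fix $e$ and consider $U_e = \{p : (\exists n)(2n+1 < |p| \andd \Phi_e^{p_0}(n)\da \neq p(2n+1))\}$, where $p_0$ denotes the even part of $p$. By genericity, either $f$ meets $U_e$, directly giving $\Phi_e^{g_0}(n) \neq g_1(n)$ for some $n$, or some $\sigma \subset f$ has no extension in $U_e$; in the latter case a short argument shows that for every $n$ with $2n+1 \geq |\sigma|$ no extension of $\sigma$ forces $\Phi_e^{g_0}(n)$ to converge (else, fixing the value at the odd position $2n+1$ to disagree would put an extension into $U_e$), whence $\Phi_e^{g_0}$ is partial along $f$. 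Either way $\Phi_e^{g_0} \neq g_1$, so $g_1 \nleqT g_0$.

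Finally I would assemble the pieces: with $A = \graph(f)$ as above, immunity gives $\degs(\MP{C}_A) \neq \md 0$, and $g_1 \nleqT g_0$ gives an infinite $B = \graph(f \restriction E) \subseteq A$ with $A \nleqe B$, hence $\MP{E}_A \nleqs \degs(\MP{C}_A)$; combined with $\degs(\MP{C}_A) \leqs \md{E}_A$ this yields $\MP{C}_{\graph(f)} \les \MP{E}_{\graph(f)}$, as required. The main obstacle I expect is not the genericity (both density facts are routine) but the careful bookkeeping in the second paragraph: verifying that enumeration reducibility $A \leqe B$ collapses to Turing reducibility $f \leqT g_0$ for these total objects, and confirming that a single generic $f$ simultaneously satisfies the immunity and incomparability requirements without interference. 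A direct finite-injury construction interleaving immunity requirements $N_e$ (ensuring $W_e \not\subseteq \graph(f)$ when $W_e$ is infinite) with diagonalization requirements $P_e \colon \Phi_e^{g_0} \neq g_1$ is an alternative, at the cost of a standard priority argument.
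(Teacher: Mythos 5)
Your proof is correct, and at the top level it follows the same decomposition as the paper's: both arguments reduce the proposition to the two facts from the preceding discussion ($A$ immune implies $\degs(\MP{C}_A) \neq \md 0$; an infinite $B \subseteq A$ with $A \nleqe B$ implies $\MP{E}_A \nleqs \MP{C}_A$), and both use the very same witness subset $B = \{\la 2x, f(2x)\ra : x \in \omega\}$ of $\graph(f)$. The difference is how $f$ is produced. The paper takes $f = \chi_A \oplus \chi_B$ for biimmune sets $A \midT B$, getting immunity of $\graph(f)$ from biimmunity and $\graph(f) \nleqe B$ from $f \nleqT g_0$ ``by totality'' (Lemma~\ref{lem-EnumTotGraph}); you instead build a single $1$-generic $f \in \omega^\omega$ and verify both requirements by density arguments, also re-deriving by hand the collapse $\graph(f) \leqe B \Biimp f \leqT g_0 \Biimp g_1 \leqT g_0$, which is correct. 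Your route is essentially a concrete instance of the paper's recipe (the halves of a $1$-generic are Turing incomparable, and its graph is immune), self-contained at the cost of two forcing computations, where the paper's proof is three lines given the known existence of biimmune Turing-incomparable pairs. Your forcing step for $g_1 \nleqT g_0$ is sound as stated: since only even positions feed the computation, any $p \supseteq \sigma$ with $\Phi_e(p_0)(n)\da$ can have its odd positions beyond $|\sigma|$ rewritten, leaving the even part intact, so as to disagree at position $2n+1 \geq |\sigma|$, yielding an extension of $\sigma$ in $U_e$. One small case is missing from your density argument for $D_e$: if the pairs coded into an infinite $W_e$ have bounded first coordinates, there is no $\la m, y \ra \in W_e$ with $m \geq |p|$ to diagonalize against; but in that case $W_e$ contains two pairs $\la m, y \ra$ and $\la m, y' \ra$ with $y \neq y'$, so every condition of length greater than $m$ already lies in $D_e$ (and such a $W_e$ is a subset of no function graph anyway). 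With that case added, $D_e$ is dense for every infinite $W_e$ and your argument is complete.
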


\begin{proof}
By the above remarks and by Lemma~\ref{lem-EnumTotGraph}, consider two
biimmune sets $A,B$ with $A \midT B$, and let $f = \chi_A \oplus \chi_B$
(where $\chi_Z$ denotes the characteristic function of $Z$). Then $\graph(f)$
is immune, and it contains an infinite subset (for instance $\{\langle 2x,
f(2x)\rangle : x \in \omega\}$) to which it does not Turing-reduce, and
hence, by totality, to which it does not e-reduce.
\end{proof}

However, if $f$ is total, then $\MP{C}_{\graph(f)} \equivs
\MP{E}_{\graph(f)}$ is almost true, as argued in the following
proposition.

\begin{Proposition}
If $f \colon \omega \imp \omega$ is total, then there is a set $B \equive
\graph{(f)}$ such that $ \MP{C}_B \equivs \MP{E}_{\graph(f)}$.
\end{Proposition}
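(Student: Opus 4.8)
The plan is to encode $f$ by the collection of its initial segments, so that any one-to-one enumeration of an infinite subset of $B$ already computes $f$. Fix a recursive injective coding $\sigma \mapsto \la\sigma\ra$ of $\omega^{<\omega}$ into $\omega$ and set
\begin{align*}
B = \{\la f\!\restriction\!n\ra : n \in \omega\}.
\end{align*}
Since the strings $f\!\restriction\!n$ have pairwise distinct lengths, their codes are distinct, so $B$ is infinite and $\MP{C}_B$ is defined. Note also that $B$ contains exactly one code of each length $n$.

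I would first verify $B \equive \graph(f)$. For $B \leqe \graph(f)$, given an enumeration of $\graph(f)$ one reads off the values $f(k)$, and once $f(0), \dots, f(n-1)$ have all appeared one enumerates $\la f\!\restriction\!n\ra$ into $B$; totality of $f$ guarantees every such code eventually appears. For $\graph(f) \leqe B$, given an enumeration of $B$ one decodes each listed code $\la f\!\restriction\!n\ra$ and thereby enumerates the pairs $\la k, f(k)\ra$ for all $k < n$, and since the lengths are unbounded this enumerates all of $\graph(f)$.

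One inequality is then automatic: by the fact (from~\cite{BianchiniSorbi}) that $\degs(\MP{C}_B) \leqs \md{E}_B$, together with $\MP{E}_B \equivs \MP{E}_{\graph(f)}$ (which follows from $B \equive \graph(f)$ and the equivalence $A \leqe B \Leftrightarrow \MP{E}_A \leqs \MP{E}_B$), we get $\MP{C}_B \leqs \MP{E}_{\graph(f)}$. The content lies in the reverse reduction $\MP{E}_{\graph(f)} \leqs \MP{C}_B$, which by Lemma~\ref{lem-EnumTotGraph} reduces to showing $\{f\} \leqs \MP{C}_B$. Given any $g \in \MP{C}_B$, every value $g(i)$ is the code of some $f\!\restriction\!n_i$, and since $\ran(g)$ is infinite while $B$ holds one code per length, the lengths $n_i$ are unbounded. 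All these decoded strings are initial segments of the single function $f$, hence pairwise consistent, so to compute $f(m)$ I search $g(0), g(1), \dots$ for the first index whose decoded string has length exceeding $m$ and output that string's value at $m$.

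The step needing the most care is checking that this last procedure is a single Turing functional that succeeds for every $g \in \MP{C}_B$ at once: the search terminates precisely because every infinite subset of $B$ meets codes of arbitrarily long initial segments, and the output is well-defined because all decoded strings extend one another as prefixes of $f$. This consistency is the conceptual heart of the argument and is exactly what the \emph{initial-segment} encoding buys us.
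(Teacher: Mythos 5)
Your proof is correct and follows essentially the same route as the paper's: both take $B$ to be the set of (codes of) initial segments of $f$, get $\MP{C}_B \leqs \MP{E}_B \equivs \MP{E}_{\graph(f)}$ from the observation of \cite{BianchiniSorbi} together with $B \equive \graph(f)$, and for the substantive direction use the same search functional exploiting that any injective enumeration of a subset of $B$ must produce initial segments of $f$ of unbounded length. The only cosmetic difference is that you compute $f$ itself and then invoke Lemma~\ref{lem-EnumTotGraph}, whereas the paper's functional enumerates $B$ directly (outputting $\sigma\!\restriction\!n$ from any decoded string $\sigma$ with $|\sigma| \geq n$), establishing $\MP{E}_B \leqs \MP{C}_B$ without the detour through $\{f\}$.
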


\begin{proof}
Given $f$ total, let $B=\{\sigma\in \omega^{<\omega} \colon \sigma \subset
f\}$. It is easy to see that $f \equive B$, so $\MP{E}_{\graph(f)} \equivs
\MP{E}_{B}$.  To see that $\MP{E}_{B} \leqs \MP{C}_{B}$, let $\Phi$ be a
Turing functional such that, for every $g$ and $n$, $\Phi(g)(n)$ searches for
an $m$ such that $g(m)$ is a string $ \sigma$ with $|\sigma| \geq n$ and then
outputs $\sigma\!\restriction\!n$.  Then $\ran(g)$ is an infinite subset of
$B$ whenever $g \in \MP{C}_B$, in which case $\ran(\Phi(g)) = B$.  Hence $
\Phi$ witnesses that $\MP{E}_{B} \leqs \MP{C}_{B}$.
\end{proof}

While it is true that every total degree of enumerability bounds (and in fact
is equivalent to) a closed mass problem, if we move away from totality,
then all possibilities may occur.  That is, there are nontotal (in fact
quasiminimal) degrees of enumerability that are closed (in fact compact, see
Theorem~\ref{thm:qm-compact} below), and there are nonzero
degrees of enumerability that do not bound nonzero closed degrees
(see Theorem~\ref{thm:no-bounding} below).

\subsection*{Compactness and cototality}
We make use of \emph{uniformly e-pointed trees}.
This notion was originally introduced by Montalb\'an~\cite{Montalban:Comm}
in the context of computable structure theory (see also~\cite{Montalban:Book}),
and it has since been studied by McCarthy in the context of
the enumeration degrees~\cite{Mccarthy}.
Montalb\'an's uniformly e-pointed trees are subtrees of
$2^{<\omega}$, which we refer to as \emph{uniformly e-pointed trees w.r.t.\
sets}.  We find it convenient to work with finitely branching subtrees of
$\omega^{<\omega}$ instead, so we define \emph{uniformly e-pointed trees
w.r.t.\ functions}.\footnote{The authors are thankful to Alexandra A.\
Soskova and Mariya I.\ Soskova for bringing to their attention, after a first
draft of this paper was completed, the notion of uniformly e-pointed tree
w.r.t.\ sets, called simply \emph{uniformly e-pointed} by Montalb\'an and
McCarthy.}

\begin{Definition}\label{def-plus}
For a function $g \in 2^\omega$, let $g^+ = \{n : g(n) = 1\}$ denote the set
of which $g$ is the characteristic function.
\end{Definition}

\begin{Definition}{\ }
\begin{itemize}
\item A \emph{uniformly e-pointed tree with respect to sets} is a tree $T
    \subseteq 2^{<\omega}$ with no leaves for which there is an enumeration
    operator $\Psi$ such that $(\forall g \in [T])(\Psi(g^+) = T)$.

\item A  \emph{uniformly e-pointed tree with respect to functions} is a
    finitely branching
    tree $T \subseteq \omega^{<\omega}$ with no leaves for which there is
    an enumeration operator $\Psi$ such that $(\forall g \in [T])(\Psi(g) =
    T)$.  (Recall that, as $\Psi$ is an enumeration operator, $\Psi(g)$
    means $\Psi(\graph(g))$.)
\end{itemize}
\end{Definition}

We show that the two notions of uniform e-pointedness coincide up to
e-equivalence.

\begin{Proposition}\label{prop:from-sets-to-functions}
Every uniformly e-pointed tree w.r.t.\ sets is a uniformly e-pointed tree
w.r.t.\ functions.
\end{Proposition}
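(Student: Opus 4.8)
The plan is to observe that the structural requirements transfer for free, so that only the enumeration operator needs to be adapted. First I would note that if $T \subseteq 2^{<\omega}$ is a uniformly e-pointed tree w.r.t.\ sets, then, viewing $T$ as a subset of $\omega^{<\omega}$, it is automatically finitely branching: every $\sigma \in T$ has at most the two successors $\sigma^\smf\la 0 \ra$ and $\sigma^\smf\la 1\ra$ in $T$. Moreover $T$ has no leaves by hypothesis. Hence $T$ already qualifies structurally as a uniformly e-pointed tree w.r.t.\ functions, and the only remaining task is to produce, from the given enumeration operator $\Psi$ satisfying $\Psi(g^+) = T$ for all $g \in [T]$, an enumeration operator $\Psi'$ with $\Psi'(\graph(g)) = T$ for all $g \in [T]$.

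The key step is that $g^+$ can be positively enumerated from $\graph(g)$ when $g$ is $\{0,1\}$-valued. Let $\Theta$ be the enumeration operator defined by $\Theta(X) = \{n : \la n, 1\ra \in X\}$. Then for every $g \in 2^\omega$ we have $\Theta(\graph(g)) = \{n : g(n) = 1\} = g^+$, because $\la n, 1\ra \in \graph(g)$ precisely when $g(n) = 1$. Since enumeration operators are closed under composition, I would let $\Psi'$ be an enumeration operator computing $\Psi \circ \Theta$, so that $\Psi'(X) = \Psi(\Theta(X))$ for every $X \subseteq \omega$.

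It then remains only to verify the defining equation. For every $g \in [T]$ we have $g \in 2^\omega$, so $\Psi'(\graph(g)) = \Psi(\Theta(\graph(g))) = \Psi(g^+) = T$, the last equality holding because $\Psi$ witnesses that $T$ is uniformly e-pointed w.r.t.\ sets. Thus $\Psi'$ witnesses that $T$ is a uniformly e-pointed tree w.r.t.\ functions. I expect no serious obstacle here; the only point requiring care is the direction of the coding, namely that one enumerates $g^+$ out of $\graph(g)$ (which is possible because the condition $g(n) = 1$ is exactly the positive information recorded in the graph) rather than attempting the reverse.
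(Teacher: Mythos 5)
Your proposal is correct and follows essentially the same route as the paper: the paper likewise composes $\Psi$ with an enumeration operator $\Gamma$ satisfying $\Gamma(\chi_A) = A$ for all $A \subseteq \omega$, which is exactly your $\Theta(X) = \{n : \la n, 1 \ra \in X\}$ made explicit. Your additional remark that $T$ is automatically finitely branching and leafless as a subtree of $\omega^{<\omega}$ is a point the paper leaves implicit, and it is a welcome bit of care.
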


\begin{proof}
Let $T \subseteq 2^{<\omega}$ be a uniformly e-pointed tree w.r.t.\ sets. Let
$\Psi$ be an enumeration operator such that $(\forall g \in [T])(\Psi(g^{+})
= T)$. Fix an enumeration operator $\Gamma$ such that $(\forall A \subseteq
\omega)(\Gamma(\chi_A) = A)$.  By composing $\Psi$ and $\Gamma$, we get an
enumeration operator $\Theta$ such that $(\forall g \in [T])(\Theta(g) = T)$.
Thus $T$ is a uniformly e-pointed tree w.r.t.\ functions.
\end{proof}

\begin{Proposition}\label{prop:link}
Let $T \subseteq \omega^{<\omega}$ be a uniformly e-pointed tree w.r.t.\
functions.  Then there is a uniformly e-pointed tree $S \subseteq
2^{<\omega}$ w.r.t.\ sets such that $S \equive T$. (In fact we may choose $S$
so that $[S]$ consists of exactly the characteristic functions of the graphs
of elements of $[T]$.)
\end{Proposition}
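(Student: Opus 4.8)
The plan is to define $S$ explicitly as the tree of initial segments of the characteristic functions of the graphs of the paths through $T$, to establish the e-equivalence $S \equive T$ by producing enumeration operators in both directions, and then to obtain the uniform e-pointedness of $S$ for free by composing the reduction $S \leqe T$ with the operator $\Psi$ witnessing the uniform e-pointedness of $T$. Concretely, I would set $S = \{\chi_{\graph(g)}\!\restriction\!n : g \in [T] \andd n \in \omega\}$. This is visibly a subtree of $2^{<\omega}$ closed under initial segments. Since $T$ is finitely branching with no leaves, $[T]$ is compact and nonempty, and the map $g \mapsto \chi_{\graph(g)}$ is continuous, so its image is a compact, hence closed, subset of $2^\omega$; therefore $[S]$ is exactly $\{\chi_{\graph(g)} : g \in [T]\}$, which is the parenthetical claim, and $S$ has no leaves because each node of $S$ is an initial segment of some member of $[S]$. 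A fact I would use throughout is that, because $T$ is finitely branching and has no leaves, König's lemma guarantees that every node of $T$ lies on some infinite path.

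For the equivalence I would first treat $S \leqe T$. Reading a binary string $\tau$ as a list of constraints (namely $g(n) = y$ when $\tau(\la n, y\ra) = 1$ and $g(n) \neq y$ when $\tau(\la n, y\ra) = 0$), I claim that $\tau \in S$ if and only if there is a node $\rho \in T$ with $|\rho| \geq |\tau|$ that is consistent with these finitely many constraints. Since $\la n, y\ra \geq n$ for the Cantor pairing, every position $n$ occurring in a constraint satisfies $n < |\tau| \leq |\rho|$, so $\rho$ pins the relevant values of $g$; the backward direction then uses König's lemma to extend $\rho$ to a path $g \in [T]$ with $\chi_{\graph(g)}\!\restriction\!|\tau| = \tau$. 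This ``$\exists \rho \in T$ with a decidable property'' condition is positive in $T$, so it is computed by an enumeration operator $\Theta$ with $\Theta(T) = S$. Dually, for $T \leqe S$ I would observe that $\rho \in T$ if and only if there is some $\tau \in S$, long enough that each $\la i, \rho(i)\ra < |\tau|$, with $\tau(\la i, \rho(i)\ra) = 1$ for all $i < |\rho|$; such a $\tau$ is an initial segment of $\chi_{\graph(g)}$ for a path $g \supseteq \rho$, and conversely any $\rho \in T$ produces such a $\tau$. This condition is positive in $S$, yielding an enumeration operator with output $T$, so $S \equive T$.

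Finally, for the uniform e-pointedness of $S$ with respect to sets, the crucial observation is that for $h \in [S]$ we have $h = \chi_{\graph(g)}$ for some $g \in [T]$, whence $h^+ = \graph(g)$. Composing the operator $\Theta$ from the previous step (satisfying $\Theta(T) = S$) with $\Psi$ (satisfying $\Psi(\graph(g)) = T$ for every $g \in [T]$) produces an enumeration operator $\Psi' = \Theta \circ \Psi$ with $\Psi'(h^+) = \Theta(\Psi(\graph(g))) = \Theta(T) = S$ for every $h \in [S]$, which is exactly what is required. I expect the conceptual crux to be precisely this composition step, while the main technical care lies in phrasing the two reductions as genuine enumeration operators and in choosing the length bounds so that the consistency conditions on $\tau$ and $\rho$ faithfully capture membership in $S$ and $T$; once those are in place, the uniform e-pointedness of $S$ drops out immediately.
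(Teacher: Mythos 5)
Your proposal is correct and follows essentially the same route as the paper: your tree $S$ coincides extensionally with the paper's tree of strings ``consistent with $T$,'' your two positive characterizations are precisely the paper's two enumeration reductions, and your concluding composition $\Theta \circ \Psi$ applied to $h^+ = \graph(g)$ is exactly the paper's final step. The only cosmetic difference is that you identify $[S]$ with $\{\chi_{\graph(g)} : g \in [T]\}$ via compactness of $[T]$ and continuity of $g \mapsto \chi_{\graph(g)}$, whereas the paper argues directly with a bound $k$ on the level-$n$ strings of $T$ --- both being the same essential use of finite branching at the same point in the argument.
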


\begin{proof}
Let $T \subseteq \omega^{<\omega}$ be a uniformly e-pointed tree w.r.t.\
functions.  Say that $ \gamma \in 2^{<\omega}$ is \emph{consistent with $T$}
if there is a $\sigma \in T$ such that
\begin{align*}
(\forall \la i, n \ra < |\gamma|)(i < |\sigma| \andd
(\gamma(\la i,n \ra) = 1 \biimp \sigma(i)=n)).
\end{align*}
Notice that if $\eta \subseteq \gamma \in 2^{<\omega}$ and $\gamma$ is
consistent with $T$, then $\eta$ is also consistent with $T$.  Let
\begin{align*}
S = \{\gamma \in 2^{<\omega} : \text{$\gamma$ is consistent with $T$}\}.
\end{align*}
Then $S$ is a tree, $S$ has no leaves because $T$ has no leaves, and it is
immediate to check that $S \leqe T$. To see that $T \leqe S$, observe that
\begin{align*}
T = \{\sigma \in \omega^{<\omega} : (\exists \gamma \in S)(\forall i <
|\sigma|)(\la i, \sigma(i) \ra \in \dom(\gamma) \andd
\gamma(\la i, \sigma(i) \ra) = 1)\}.
\end{align*}
Furthermore, $[S] = \{\chi_{\graph(f)} : f \in [T]\}$.  If $f \in [T]$, then
$\chi_{\graph(f)}\!\restriction\!n$ is consistent with $T$ for every $n$ (as
witnessed by $f\!\restriction\!n$), thus $\chi_{\graph(f)}\!\restriction\!n
\in S$ for every $n$, thus $\chi_{\graph(f)} \in [S]$. Conversely, suppose
that $f \notin [T]$.  Then there is an $n$ such that $f\!\restriction\!n
\notin T$. We want to find an $m$ such that
$\chi_{\graph(f)}\!\restriction\!m \notin S$ in order to conclude that
$\chi_{\graph(f)} \notin [S]$.  By the fact that $T$ is finitely branching,
let $k$ be large enough so that $(\forall i < | \sigma|)(\sigma(i) < k)$
whenever $\sigma \in T$ has length $\leq n$.  Let $m > \la n, k \ra$. Suppose
for a contradiction that $\chi_{\graph(f)}\!\restriction\!m$ is consistent
with $T$, and let $ \sigma$ witness this. Then it must be that $|\sigma| \geq
n$ and $(\forall i < n)(\sigma(i) = f(i))$. Thus $\sigma \supseteq
f\!\restriction\!n$, contradicting that $f\!\restriction\!n \notin T$.  Thus
$ \chi_{\graph(f)}\!\restriction\!m$ is not consistent with $T$, so
$\chi_{\graph(f)}\!\restriction\!m \notin S$.

To finish, we need to find an enumeration operator $\Psi$ such that $(\forall
g \in [S])(\Psi(g^{+}) = S)$.  So let $\Theta$ be an enumeration operator
such that $ (\forall f \in [T])(\Theta(f) = T)$, and let $\Gamma$ be an
enumeration operator witnessing that $S \leqe T$.  By composing $\Gamma$ and
$\Theta$, we get an enumeration operator $\Psi$ such that $(\forall f \in
[T])(\Psi(f) = S)$. However, this is exactly what we want because we have
shown that if $g \in [S]$, then $g = \chi_{\graph(f)}$ for some $f \in [T]$
and therefore that if $g \in [S]$ then $g^+ = \graph(f)$ for some $f \in
[T]$. Thus $(\forall g \in [S]) (\Psi(g^+) = S)$, as desired (recall that
$\Theta(f)=\Theta(\graph(f))$).
\end{proof}

A set $A$ is called \emph{cototal} if $A \leqe \overline{A}$, and an e-degree
is called \emph{cototal} if it contains a cototal set~\cite{Andrews-et-al}.
Every uniformly e-pointed tree w.r.t.\ sets is cototal by
\cite{Mccarthy}*{Theorem~4.7}, and, by \cite{Mccarthy}*{Corollary~4.9.1}, an
e-degree is cototal if and only if it contains a uniformly e-pointed tree
w.r.t.\ sets.

\begin{Proposition}\label{prop-ePointedCototal}
An enumeration degree is cototal if and only if it contains a uniformly
e-pointed tree w.r.t.\ functions.
\end{Proposition}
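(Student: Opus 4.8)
The plan is to derive the proposition immediately by combining Propositions~\ref{prop:from-sets-to-functions} and~\ref{prop:link} with McCarthy's characterization of the cototal e-degrees recalled just above, namely that an e-degree is cototal if and only if it contains a uniformly e-pointed tree w.r.t.\ sets \cite{Mccarthy}*{Corollary~4.9.1}. The key observation throughout is that the constructions in those two propositions either leave the tree (viewed as a set) unchanged or replace it by an e-equivalent one, so that the e-degree under consideration is preserved at every step.

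For the forward direction, I would start with a cototal e-degree and apply McCarthy's result to obtain a uniformly e-pointed tree $S \subseteq 2^{<\omega}$ w.r.t.\ sets lying in that degree. Since $2^{<\omega}$ is a finitely branching subtree of $\omega^{<\omega}$, Proposition~\ref{prop:from-sets-to-functions} applies verbatim and tells me that this very same $S$ is also a uniformly e-pointed tree w.r.t.\ functions. As $S$ is not altered, it still lies in the original cototal degree, so that degree contains a uniformly e-pointed tree w.r.t.\ functions.

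For the converse, I would begin with an e-degree $\mathbf{a}$ containing a uniformly e-pointed tree $T$ w.r.t.\ functions, so $\dege(T) = \mathbf{a}$. Proposition~\ref{prop:link} then produces a uniformly e-pointed tree $S \subseteq 2^{<\omega}$ w.r.t.\ sets with $S \equive T$; hence $\dege(S) = \mathbf{a}$ as well. Applying McCarthy's characterization to $S$ shows that $\mathbf{a}$ is cototal, completing the equivalence.

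The proposition is essentially a bookkeeping corollary of the preceding work, so there is no serious obstacle: the only point requiring care is verifying that the e-degree is genuinely preserved across the two translations between the ``w.r.t.\ sets'' and ``w.r.t.\ functions'' notions---trivially in the forward direction, where the set $S$ is untouched, and via the clause $S \equive T$ of Proposition~\ref{prop:link} in the converse. Once this is noted, both implications are immediate.
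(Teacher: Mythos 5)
Your proposal is correct and matches the paper's own proof, which likewise derives the proposition by chaining McCarthy's characterization of the cototal e-degrees with Proposition~\ref{prop:from-sets-to-functions} and Proposition~\ref{prop:link}, noting exactly as you do that the e-degree is preserved (trivially in one direction, via $S \equive T$ in the other). The paper's proof additionally includes, as a supplement rather than a necessity, a direct argument via the skip operator that every uniformly e-pointed tree w.r.t.\ functions has cototal e-degree, but your bookkeeping derivation is the complete official argument.
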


\begin{proof}
An e-degree is cototal if and only if it it contains a uniformly e-pointed
tree w.r.t.\ sets by \cite{Mccarthy}*{Theorem~4.7} if and only if it contains
a uniformly e-pointed tree w.r.t.\ functions by
Proposition~\ref{prop:from-sets-to-functions} and
Proposition~\ref{prop:link}.

We also find it interesting to give a more direct proof that every uniformly
e-pointed tree w.r.t.\ functions has cototal enumeration degree. This can be
accomplished via the easy characterization of the cototal enumeration degrees
in terms of the \emph{skip operator} from \cite{Andrews-et-al}.

Recall that $(\Psi_e : e \in \omega)$ is a standard list of all enumeration
operators, and recall the following definitions.

\begin{itemize}
\item For an $A \subseteq \omega$, $K_A = \{\la e, x \ra : x \in
    \Psi_e(A)\}$.

\item For an $A \subseteq \omega$, $A^\skp = \overline{K_A}$ is
    called the \emph{skip} of $A$.
\end{itemize}
By \cite{Andrews-et-al}*{Proposition~1.1},
a set $A \subseteq \omega$ has cototal enumeration degree if and only if $A \leqe
A^\skp$.

Let $T$ be a uniformly e-pointed tree w.r.t.\ functions.  We show that $T
\leqe T^\skp$ and therefore that $T$ has cototal enumeration degree. Let
$\Psi$ be an enumeration operator such that $(\forall f \in [T])(\Psi(f) =
T)$.  For each $n \in \omega$, let $T^n = \{\sigma \in T : |\sigma| = n\}$
denote level $n$ of $T$.  For $b, n \in \omega$, let $b^n = \{\sigma \in
\omega^{<\omega} : |\sigma| = n \andd (\forall i < |\sigma|)(\sigma(i) <
b)\}$ denote the set of all $b$-bounded strings of length $n$. Let $B = \{\la
n, b \ra : T^n \smallsetminus b^n \neq \emptyset\}$. That is, $B$ is the set
of all pairs $\la n, b \ra$ where $b$ is not big enough to bound every entry
of every string in $T^n$.  We have $B \leqe T$, thus $B\le_1 K_T$, and
therefore $\overline B \leqe T^\skp$.  The point is that if $\la n, b \ra \in
\overline B$, then $T^n \subseteq b^n$, which allows us enumerate $T$ from an
enumeration of $\overline{T} \oplus \overline{B}$. Indeed,
\begin{align*}
T = \{\sigma : (\exists \la n, b \ra \in \overline{B})(\exists L
\subseteq b^n \cap \overline{T})(\forall \tau \in b^n
\smallsetminus L)(\sigma \in \Psi(\tau))\}.
\end{align*}
That is, we know that $\sigma \in T$ when we see a bound $T^n \subseteq b^n$
and a set of strings $L \subseteq b^n$ that are \emph{not} in $T$ such that
the remaining $\tau \in b^n \smallsetminus L$ all satisfy $\sigma \in
\Psi(\tau)$. Thus $T \leqe \overline{T} \oplus \overline{B} \leqe T^\skp$, so
$T$ has cototal enumeration degree.
\end{proof}

We extend the cototal terminology to the degrees of enumerability by saying
that $\md{E}_{A}$ is \emph{cototal} if $A$ has cototal enumeration degree. To
conclude this section, we show that cototality and compactness are equivalent
properties of a degree of enumerability.

\begin{Lemma}\label{lem-EnumTree}
Let $A \subseteq \omega$ be nonempty, and let $\MP C \subseteq \omega^\omega$
be closed such that $\MP C \leqs \MP{E}_A$.  Then there is a tree $T
\subseteq \omega^{<\omega}$ with no leaves such that $T \leqe A$ and $[T]
\subseteq \MP C$.  Furthermore, if $\MP C$ is compact, then $T$ is finitely
branching.
\end{Lemma}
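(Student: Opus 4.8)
The plan is to build $T$ directly from the reduction witnessing $\MP C \leqs \MP{E}_A$. Fix a Turing functional $\Phi$ with $\Phi(\MP{E}_A) \subseteq \MP C$, so that $\Phi(f)$ is total and lies in $\MP C$ whenever $\ran(f) = A$. Recall that for a finite string $\sigma$, $\Phi(\sigma)$ denotes the finite string of values computed so far, and that $\sigma \subseteq \sigma'$ implies $\Phi(\sigma) \subseteq \Phi(\sigma')$. The key observation is that any $\sigma \in \omega^{<\omega}$ with $\ran(\sigma) \subseteq A$ is an initial segment of some enumeration of $A$, so its output $\Phi(\sigma)$ is a finite approximation to a genuine member of $\MP C$. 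I would therefore set
\[
T = \{\tau \in \omega^{<\omega} : (\exists \sigma \in \omega^{<\omega})(\ran(\sigma) \subseteq A \andd \tau \subseteq \Phi(\sigma))\},
\]
which is a tree, being closed under initial segments by construction.

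For $T \leqe A$: from an enumeration of $A$ one can positively confirm, for each finite $\sigma$, that $\ran(\sigma) \subseteq A$ (this happens once every value of $\sigma$ has been enumerated into $A$); upon confirmation one computes the finite string $\Phi(\sigma)$ and enumerates all of its initial segments into $T$. This is exactly an enumeration operator applied to $A$, so $T \leqe A$.

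For the no-leaves property and $[T] \subseteq \MP C$, the common device is that any $\sigma$ with $\ran(\sigma) \subseteq A$ extends to some $f$ with $\sigma \subset f$ and $\ran(f) = A$ (possible since $A \neq \emptyset$), whence $\Phi(f) \in \MP C$ is total and $\Phi(\sigma) \subseteq \Phi(f)$ by monotonicity. Given $\tau \in T$ witnessed by $\sigma$, such an $f$ gives $\Phi(f \restriction k) \supsetneq \tau$ for large enough $k \geq |\sigma|$, and $f \restriction k$ still has range inside $A$; hence $\tau$ has a proper extension in $T$, so $T$ has no leaves. For $[T] \subseteq \MP C$, take $g \in [T]$: for each $n$ the string $g \restriction n \in T$ is witnessed by some $\sigma_n$, and extending $\sigma_n$ to an enumeration $f_n$ of $A$ gives $g \restriction n \subseteq \Phi(\sigma_n) \subseteq \Phi(f_n) \in \MP C$. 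Thus $\Phi(f_n) \to g$, and since $\MP C$ is closed, $g \in \MP C$.

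Finally, for the compact case I would use that a compact subset of Baire space is $h$-bounded for some $h \in \omega^\omega$, since its defining finitely branching tree has finite levels. Because $T$ has no leaves, every $\tau \in T$ lies on some path $g \in [T] \subseteq \MP C$, so $\tau = g \restriction |\tau|$ is $h$-bounded; hence $T$ is $h$-bounded and therefore finitely branching. The step needing the most care is the verification of $[T] \subseteq \MP C$: one must ensure the finite outputs $\Phi(\sigma)$ assembled into $T$ really do approximate \emph{total} members of $\MP C$, which is precisely where closedness of $\MP C$ and the monotonicity $\Phi(\sigma) \subseteq \Phi(f)$ are used. I expect no deeper obstacle, as the single definition of $T$ simultaneously secures enumerability (the positive information $\ran(\sigma) \subseteq A$), the path condition (closedness), and finite branching (boundedness of compact sets).
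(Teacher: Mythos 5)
Your proposal is correct and takes essentially the same route as the paper's own proof: you define the identical tree $T = \{\tau : (\exists \sigma)(\ran(\sigma) \subseteq A \text{ and } \tau \subseteq \Phi(\sigma))\}$ and verify $T \leqe A$, leaflessness (by extending a witness $\sigma$ to an enumeration $f$ of $A$ and using totality of $\Phi(f)$), and $[T] \subseteq \MP{C}$ (by showing each $g \in [T]$ is a limit of points $\Phi(f_n) \in \MP{C}$ and invoking closedness) exactly as the paper does. Your compactness step via $h$-boundedness is just a mild rephrasing of the paper's observation that $[T]$, being a closed subset of the compact $\MP{C}$, is compact, which together with leaflessness forces $T$ to be finitely branching.
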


\begin{proof}
Let $\Phi$ be a Turing functional such that $\Phi(\MP{E}_A)
\subseteq \MP C$,
with $\MP{C}$ closed. Let
\begin{align*}
T = \{\sigma : \exists \alpha(\ran(\alpha) \subseteq A \andd
\sigma \subseteq \Phi(\alpha))\}.
\end{align*}
Then $T$ is a tree and $T \leqe A$. To see that $T$ has no leaves, let
$\sigma \in T$, and let $\alpha$ be such that $\ran(\alpha) \subseteq A$ and
$\sigma \subseteq \Phi(\alpha)$. Let $f \colon \omega \imp \omega$ be such
that $\alpha \subset f$ and $\ran(f) = A$.  Let $\beta$ be such that $\alpha
\subseteq \beta \subset f$ and $\Phi(\beta)(|\sigma|)\da$.  Then $\sigma
\subsetneq \Phi(\beta) \in T$, so $\sigma$ is not a leaf.  To see that $[T]
\subseteq \MP C$, we consider a $g \in [T]$ and show that $g$ is in the
closure of $\MP C$. To this end, let $n \in \omega$, let $\alpha$ be such
that $\ran(\alpha) \subseteq A$ and $g\!\restriction\!n \subseteq
\Phi(\alpha)$, and let $f \colon \omega \imp \omega$ be such that $\alpha
\subset f$ and $\ran(f) = A$. Then $\Phi(f) \in \MP{C}$ and
$\Phi(f)\!\restriction\!n = g\!\restriction\!n$. Hence $g$ is in the closure
of $\MP C$, so $g \in \MP C$.

Lastly, if $\MP C$ is compact, then $[T]$ is compact because $[T] \subseteq
\MP C$.  This means that $T$ must be finitely branching because $T$ has no
leaves.
\end{proof}

\begin{Lemma}\label{lem-EnumCompact}
Let $A \subseteq \omega$ be nonempty. Then $\md{E}_A$ is compact if and only
if there is a uniformly e-pointed tree $T \subseteq
\omega^{<\omega}$ w.r.t.\ functions such that $T \equive A$.
\end{Lemma}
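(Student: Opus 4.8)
The plan is to prove the two directions separately, in both cases comparing the compact mass problem with the path space $[T]$ of a finitely branching tree.

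For the backward direction, suppose we are handed a uniformly e-pointed tree $T$ w.r.t.\ functions with $T \equive A$. I would first dispose of the degenerate case $T = \emptyset$, which forces $A$ to be r.e.\ (as $A \leqe \emptyset$) and hence $\md{E}_A = \md 0$, a compact degree. Otherwise $T$ is infinite, so $[T]$ is a nonempty compact set, and the claim is that $[T] \equivs \MP{E}_A$. Since $A \equive T$ gives $\MP{E}_A \equivs \MP{E}_T$, it suffices to prove $[T] \equivs \MP{E}_T$. The reduction $\MP{E}_T \leqs [T]$ comes straight from the e-pointing operator: if $\Psi(\graph(g)) = T$ for every $g \in [T]$, then from a path $g$ we enumerate $T$ and list this enumeration as a function with range $T$. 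For the reverse reduction $[T] \leqs \MP{E}_T$, given any $h$ with $\ran(h) = T$ I would build a path greedily, starting from the empty string (which is in $T$) and at each node $\sigma$ already placed on the path searching the values of $h$ for a one-step extension $\sigma^\smf m \in T$; such an extension exists because $T$ has no leaves and appears in $\ran(h) = T$, so the search halts and the resulting function is a path. This yields $[T] \equivs \MP{E}_T \equivs \MP{E}_A$, so $\md{E}_A$ is compact.

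For the forward direction, suppose $\md{E}_A$ is compact, witnessed by a compact $\MP C \equivs \MP{E}_A$. Applying Lemma~\ref{lem-EnumTree} to $\MP C \leqs \MP{E}_A$ produces a finitely branching tree $T$ with no leaves such that $T \leqe A$ and $[T] \subseteq \MP C$. The inclusion $[T] \subseteq \MP C$ gives $\MP C \leqs [T]$ via the identity functional, whence $\MP{E}_A \equivs \MP C \leqs [T]$; fix a Turing functional $\Phi$ with $\ran(\Phi(g)) = A$ for every $g \in [T]$. To see that $T$ is uniformly e-pointed w.r.t.\ functions, I would recast $\Phi$ as an enumeration operator $\Phi^*$ satisfying $\Phi^*(\graph(g)) = A$ for each $g \in [T]$ (enumerating $\ran(\Phi(g))$ from $\graph(g)$), and then compose it with an enumeration operator $\Lambda$ witnessing $T \leqe A$; the composite $\Psi = \Lambda \circ \Phi^*$ satisfies $\Psi(\graph(g)) = \Lambda(A) = T$ for every $g \in [T]$, which is exactly the e-pointing condition.

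It then remains to upgrade $T \leqe A$ to $T \equive A$, i.e.\ to obtain $A \leqe T$, and this is the step I expect to be the main obstacle. The temptation is to enumerate $A$ directly from $T$ by a compactness argument over the cover of $[T]$ by the enumeration axioms of $\Phi$, which works but is fiddly. I would avoid it by reusing the greedy-path reduction from the backward direction: $[T] \leqs \MP{E}_T$ holds for every finitely branching tree with no leaves, in particular for this $T$. Chaining reductions gives $\MP{E}_A \leqs [T] \leqs \MP{E}_T$, hence $\MP{E}_A \leqs \MP{E}_T$, which is equivalent to $A \leqe T$. Combined with $T \leqe A$ this yields $T \equive A$ and completes the proof, so the real work is recognizing that the greedy-path reduction is available here and that $\Phi$ can be treated as an enumeration operator for the composition.
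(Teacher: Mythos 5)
Your proof is correct. The backward direction is essentially the paper's own argument (the e-pointing operator gives $\MP{E}_A \leqs [T]$, and the greedy path-following procedure through an enumerated leafless tree gives $[T] \leqs \MP{E}_T \equivs \MP{E}_A$); your explicit treatment of the degenerate case $T = \emptyset$ is a point the paper passes over silently, and it is handled correctly. The forward direction, however, takes a genuinely different route. The paper does not apply Lemma~\ref{lem-EnumTree} to $\MP C$ itself: taking $\Phi$ with $\Phi(\MP C) \subseteq \MP{E}_A$, it first pushes the compact set forward, setting $\MP D = \Phi(\MP C)$, which is again compact and sits \emph{inside} $\MP{E}_A$, and applies Lemma~\ref{lem-EnumTree} to $\MP D$. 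The payoff is that $[T] \subseteq \MP{E}_A$, so every path of $T$ is literally an enumeration of $A$; then $A = \bigcup_{\sigma \in T} \ran(\sigma)$ gives $A \leqe T$ at once, and uniform e-pointedness is immediate since $\ran(g) = A$ for each $g \in [T]$ and $T \leqe A$. You instead keep only $[T] \subseteq \MP C$ and recover both facts indirectly: e-pointedness by converting the functional witnessing $\MP{E}_A \leqs [T]$ into an enumeration operator on graphs (your $\Phi^*$; the standard device of restricting axioms to graphs of initial segments makes this airtight) and composing with $T \leqe A$; and $A \leqe T$ via the chain $\MP{E}_A \leqs [T] \leqs \MP{E}_T$ together with the embedding fact $\MP{E}_A \leqs \MP{E}_B \Biimp A \leqe B$ from the paper's preliminaries. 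Both arguments are sound; the paper's pushforward trick makes the two key properties fall out with no extra machinery, while your version buys a slightly more general statement --- it shows that any finitely branching leafless $T \leqe A$ with $\MP{E}_A \leqs [T]$ is uniformly e-pointed and e-equivalent to $A$, which weakens the hypothesis $[T] \subseteq \MP{E}_A$ in the standalone Proposition the paper records immediately after the lemma.
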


\begin{proof}
Suppose that $\MP{E}_A \equivs \MP C$, where $\MP C$ is compact.  Let $\Phi$
be a Turing functional such that $\Phi(\MP C) \subseteq \MP{E}_A$.  $\MP C$
is compact, so its image $\MP D = \Phi(\MP C)$ is also compact by the
continuity of the Turing functional $\Phi$. By Lemma~\ref{lem-EnumTree},
there is a finitely branching tree $T \subseteq \omega^{<\omega}$ with no
leaves such that $T \leqe A$ and $[T] \subseteq \MP D \subseteq \MP{E}_A$.
Furthermore, $A = \bigcup_{\sigma \in T} \ran(\sigma)$ because $[T] \subseteq
\MP{E}_A$, which implies that $A \leqe T$.  Hence $T \equive A$. Also, if
$g \in [T]$, then $\ran(g)=A$, thus there is a uniform procedure enumerating
$A$ and hence $T$ from any enumeration of $g$, which shows that $T$ is
uniformly e-pointed w.r.t.\ functions.

Conversely, suppose that there is a uniformly e-pointed
tree $T \subseteq \omega^{<\omega}$ w.r.t.\ functions such that $T \equive A$.
Then $\MP{E}_A \leqs [T]$, as one can uniformly transform
any function $g \in [T]$ into a function that enumerates $A$ because $T\leqe
g$ uniformly and $A\leqe T$. To see that $[T] \leqs \MP{E}_A$, consider the
Turing functional $\Phi$ which, on an $f \in \MP{E}_A$, uses $\ran(f)$ to
simultaneously enumerate $T$ (via the reduction $T \leqe A$) and a path
through $T$ (which is possible because $T$ has no leaves).  Thus $\MP{E}_A
\equivs [T]$, and $[T]$ is compact because $T$ is finitely branching.
\end{proof}

Observe that the proof  of Lemma~\ref{lem-EnumCompact} also proves the
following fact, which we record for posterity.

\begin{Proposition}
Let $A \subseteq \omega$ be nonempty.  If $T \subseteq \omega^{<\omega}$
is a finitely branching tree with no leaves such that $T \leqe A$ and $[T]
\subseteq \MP{E}_A$, then $T \equive A$, $[T] \equivs \MP{E}_A$,
and $T$ is uniformly e-pointed w.r.t.\ functions.
\end{Proposition}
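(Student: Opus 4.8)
The plan is to read off all three conclusions from the same computations that appear in the first half of the proof of Lemma~\ref{lem-EnumCompact}, the only difference being that here the tree $T$ is supplied by hypothesis rather than produced by Lemma~\ref{lem-EnumTree}. The single fact driving everything is that $[T] \subseteq \MP{E}_A$ forces $\ran(g) = A$ for every path $g \in [T]$.

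First I would prove $T \equive A$. Since $T \leqe A$ is assumed, it suffices to establish $A \leqe T$, and for this I would verify that $A = \bigcup_{\sigma \in T} \ran(\sigma)$. The inclusion from right to left holds because $T$ has no leaves, so each $\sigma \in T$ extends to some path $g \in [T]$, whence $\ran(\sigma) \subseteq \ran(g) = A$; the reverse inclusion holds because any $a \in A$ equals $g(k)$ for some path $g \in [T]$ and some $k$, so $a$ lies in $\ran(\sigma)$ for a sufficiently long initial segment $\sigma \subset g$ with $\sigma \in T$. As the right-hand side is enumerable from $T$, this gives $A \leqe T$, and hence $T \equive A$.

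Next I would show $[T] \equivs \MP{E}_A$. The reduction $\MP{E}_A \leqs [T]$ is witnessed by the identity functional, since $[T] \subseteq \MP{E}_A$ already. For the reduction $[T] \leqs \MP{E}_A$ I would exhibit a functional $\Phi$ that, given $f \in \MP{E}_A$, first enumerates $T$ from $\ran(f) = A$ (using $T \leqe A$) and then computes an infinite path through $T$ from this enumeration. The path is built greedily: beginning at the root, one repeatedly enumerates $T$ until an immediate successor of the current node appears and then passes to it. This never stalls precisely because $T$ has no leaves, so $\Phi(f) \in [T]$ for every $f \in \MP{E}_A$, yielding $[T] \leqs \MP{E}_A$ and therefore $[T] \equivs \MP{E}_A$.

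Finally, for uniform e-pointedness I would compose two enumeration operators: one sending $\graph(g)$ to $\ran(g)$ and one witnessing $T \leqe A$. For any $g \in [T]$ we have $\ran(g) = A$, so the composite operator $\Psi$ satisfies $\Psi(g) = T$ uniformly in $g$, which is exactly the definition of uniform e-pointedness w.r.t.\ functions (the finite-branching requirement in that definition is supplied by hypothesis). I do not expect a genuine obstacle, since every ingredient is already present in the proof of Lemma~\ref{lem-EnumCompact}; the only step needing care is the greedy path construction for $[T] \leqs \MP{E}_A$, where the absence of leaves is what guarantees that the construction always succeeds, whereas finite branching plays no role beyond meeting the definitional requirement for uniform e-pointedness.
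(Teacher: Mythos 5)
Your proof is correct and follows essentially the same route as the paper, which obtains all three conclusions by re-running the computations in the proof of Lemma~\ref{lem-EnumCompact}: $A = \bigcup_{\sigma \in T}\ran(\sigma)$ gives $A \leqe T$, the greedy path construction (using leaflessness) gives $[T] \leqs \MP{E}_A$, and the fact that $\ran(g) = A$ for every $g \in [T]$ gives uniform e-pointedness. Your one small streamlining --- witnessing $\MP{E}_A \leqs [T]$ by the identity functional, since the hypothesis $[T] \subseteq \MP{E}_A$ is available here (unlike in the converse direction of the lemma) --- is a legitimate and slightly cleaner shortcut, as is your remark that finite branching is needed only to meet the definition of uniform e-pointedness w.r.t.\ functions.
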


\begin{Theorem}\label{thm:cototal=compact}
$\md{E}_A$ is a compact degree of enumerability if and only if $A$ has
cototal enumeration degree.  Hence a degree of enumerability is compact if
and only if it is cototal.
\end{Theorem}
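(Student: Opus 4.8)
The plan is simply to combine Lemma~\ref{lem-EnumCompact} and Proposition~\ref{prop-ePointedCototal}, which together have already done all the substantive work; the theorem is then a formal corollary. The first step I would take is to observe that the right-hand condition of Lemma~\ref{lem-EnumCompact}---that there is a uniformly e-pointed tree $T \subseteq \omega^{<\omega}$ w.r.t.\ functions with $T \equive A$---says precisely that the enumeration degree of $A$ contains a uniformly e-pointed tree w.r.t.\ functions, since $T \equive A$ means exactly that $T$ lies in the e-degree of $A$.

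With this identification in hand, the biconditional follows from the chain
\begin{align*}
\md{E}_A \text{ is compact}
&\Biimp \text{there is a uniformly e-pointed $T$ w.r.t.\ functions with $T \equive A$}\\
&\Biimp \text{the e-degree of $A$ contains such a tree}\\
&\Biimp \text{the e-degree of $A$ is cototal},
\end{align*}
where the first equivalence is Lemma~\ref{lem-EnumCompact} and the last is Proposition~\ref{prop-ePointedCototal}. By the definition preceding the theorem, the e-degree of $A$ being cototal is exactly the condition that $\md{E}_A$ is cototal, so $\md{E}_A$ is compact if and only if $A$ has cototal enumeration degree.

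For the concluding ``Hence'' sentence, I would note that every degree of enumerability has the form $\md{E}_A$ for some nonempty $A \subseteq \omega$, and that cototality is well-defined as a property of such a degree: if $\md{E}_A = \md{E}_B$, then $\MP{E}_A \equivs \MP{E}_B$, whence $A \equive B$ by injectivity of the embedding $\dege(A) \mapsto \md{E}_A$, so $A$ is cototal if and only if $B$ is. Thus ``compact $\Biimp$ cototal'' for degrees of enumerability is merely the biconditional above restated degree-theoretically.

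I expect no genuine obstacle here, precisely because the real content sits in the two cited results. The only points demanding care are confirming that the notion of \emph{uniformly e-pointed tree w.r.t.\ functions} is literally the same object in both Lemma~\ref{lem-EnumCompact} and Proposition~\ref{prop-ePointedCototal}, and checking that ``$T \equive A$'' coincides with membership of $T$ in the enumeration degree of $A$; once these are matched, the equivalences line up verbatim.
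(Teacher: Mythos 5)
Your proposal is correct and is essentially identical to the paper's own proof, which likewise derives the theorem by chaining Lemma~\ref{lem-EnumCompact} (compactness of $\md{E}_A$ iff $A \equive T$ for a uniformly e-pointed tree $T$ w.r.t.\ functions) with Proposition~\ref{prop-ePointedCototal} (such a tree lies in an e-degree iff that degree is cototal). Your extra remarks on well-definedness of cototality as a degree property are sound but routine; the paper leaves them implicit.
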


\begin{proof}
The degree of enumerability $\md{E}_A$ is compact if and only if $A \equive
T$ for some uniformly e-pointed tree $T \subseteq \omega^{<\omega}$ w.r.t.\
functions by Lemma~\ref{lem-EnumCompact}, which is the case if and only if
$A$ has cototal enumeration degree by Proposition~\ref{prop-ePointedCototal}.
\end{proof}

\subsection*{A quasiminimal degree of enumerability that is compact.}
The existence of quasiminimal problems of enumerability that are equivalent
to compact mass problems is a consequence of
Theorem~\ref{thm:cototal=compact} and the fact that there are cototal
quasiminimal e-degrees~\cite{Andrews-et-al}. We think, however, that it is
instructive to directly construct a quasiminimal uniformly e-pointed tree
w.r.t.\ functions.  The corresponding degree of enumerability is then
quasiminimal by definition and compact by Lemma~\ref{lem-EnumCompact}.

Recall that $\la \cdot, \cdot \ra \colon \omega^2 \imp \omega$ is the
recursive pairing function.  Let $\pi_0, \pi_1 \colon \omega \imp \omega$
denote the projection functions $\pi_0(\la m, n \ra) = m$ and $\pi_1(\la m, n
\ra) = n$.

\begin{Lemma}\label{lem-SelfEnumTree}
There is a finitely branching tree $A \subseteq \omega^{<\omega}$ such that
\begin{itemize}
\item $A$ has no leaves,
\item $A$ is quasiminimal, and
\item $\ran(\pi_1 \circ f) = A$ for every $f \in [A]$.
\end{itemize}
Notice that such a tree is uniformly e-pointed w.r.t.\ functions.
\end{Lemma}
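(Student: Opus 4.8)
The plan is to build the tree $A$ by a finite-extension (forcing) construction, approximating it by a chain of finite conditions whose union is $A$. I would take a condition to be a finite, finitely branching subtree $F \subseteq \omega^{<\omega}$ that is \emph{level-complete}: there is an $\ell = \ell(F)$ such that every $\sigma \in F$ with $|\sigma| < \ell$ has at least one child in $F$ and the strings of length $\ell$ are exactly the leaves of $F$. A condition $F'$ \emph{extends} $F$ if $F' \supseteq F$, $\ell(F') > \ell(F)$, and $F'$ end-extends $F$ level-by-level. Building $F_0 \subseteq F_1 \subseteq \cdots$ and setting $A = \bigcup_s F_s$, one gets a tree with no leaves (every leaf eventually receives a child) that is finitely branching (each $F_s$ is, and we add only finitely many children at a time), so $[A]$ is nonempty and compact. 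Throughout I identify $A$ with $\{\mathrm{code}(\sigma) : \sigma \in A\} \subseteq \omega$ under a fixed recursive coding of $\omega^{<\omega}$, so that ``$A$ is r.e.'' and ``$f \leqe A$'' have their usual meaning. I meet three families of requirements: $Q_e$, ensuring $A \neq W_e$ (the $e$-th r.e.\ set), so that $A$ is not r.e.\ and hence $\md 0 \les \md{E}_A$; $R_e$, ensuring that if $\Psi_e(A)$ is the graph of a total function then that function is recursive (quasiminimality); and the self-enumeration condition $\ran(\pi_1 \circ f) = A$ for every $f \in [A]$.

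For self-enumeration I reserve the coordinate $\pi_1$ for \emph{announcements}: every value ever placed into a string of $A$ is a pair $\la b, a \ra$ whose second component $a$ is $\mathrm{code}(\tau)$ for a node $\tau$ already placed in $A$, which guarantees $\ran(\pi_1 \circ f) \subseteq A$ automatically. For the reverse inclusion, a bookkeeping assigns to each node of $A$ its own sparse \emph{covering level}: when I build the covering level $\ell$ assigned to a node $\tau$, I force \emph{every} child added at level $\ell$ to announce $\tau$ (that is, $\pi_1 = \mathrm{code}(\tau)$ for all of them, with the branching carried entirely by the $\pi_0$ component). Then every path reads $\tau$ at step $\ell$, so every node is announced on every path and $\ran(\pi_1 \circ f) = A$. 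At all non-covering levels I deliberately let the announcements \emph{differ across branches}, so that there is no level $n$ at which all sufficiently long nodes of $A$ share a common value; this is essential, because making \emph{every} level a covering level would make $n \mapsto \pi_1(\text{value at position }n)$ a nonrecursive total function enumeration-reducible to $A$, which would make $R_e$ unsatisfiable for the corresponding index. Keeping covering levels sparse leaves only a partial (domain = covering levels) map readable, which total-function quasiminimality does not forbid.

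The requirements $Q_e$ are met by a routine diagonalization that exploits the fact that, because I am \emph{constructing} the tree, I may permanently \emph{forbid} a fresh candidate node from ever entering $A$ as well as choose to add it; for each $e$ I reserve a fresh candidate node and decide its $A$-membership opposite to its (monitored) $W_e$-membership. The heart of the proof is $R_e$, which I meet by a genericity argument modeled on the proof that $1$-generic sets have quasiminimal enumeration degree. For each $e$ and each input $x$ I pass through a dense set of conditions that either place some pair $\la x, y \ra$ into $\Psi_e(F)$ from the current finite tree $F$ (a positively decidable event) or else permanently forbid $\la x, \cdot\ra$ from ever entering $\Psi_e(A)$ by committing to negative information, i.e.\ by forbidding forever every string whose future presence could fire an axiom for $\la x, \cdot \ra$. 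The decisive feature of \emph{tree} forcing here is that, unlike in enumeration, I retain genuine negative control. Hence if $\Psi_e(A)$ is total, the ``forbid'' alternative is impossible at every $x$, so the construction forces a value for each $x$ below some fixed finite condition $q_e$; and because the negative commitments below $q_e$ block all spurious axioms, the true value $\Psi_e(A)(x)$ is computed, uniformly in $x$ with the finite parameter $q_e$, by searching for any finite tree $F \supseteq q_e$ consistent with those commitments and with $\la x, y \ra \in \Psi_e(F)$. This produces a recursive function, as required.

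I expect the main obstacle to be precisely this interleaving of the $R_e$-strategies with the covering levels: the covering levels inject branch-independent information into $A$, and the whole point is to prevent that information from ever assembling into a nonrecursive total function $\leqe A$. I would handle this by keeping the covering levels sparse and making every non-covering branching choice generic, so that the density arguments underlying each $R_e$ go through unaffected by the covering constraints (which touch only $\pi_1$, never $\pi_0$). Granting the construction, the final clause of the lemma is immediate: the operator $\Psi$ defined by $\Psi(g) = \ran(\pi_1 \circ g)$ is an enumeration operator with $\Psi(g) = A$ for every $g \in [A]$, so the finitely branching, leafless tree $A$ is uniformly e-pointed w.r.t.\ functions, quasiminimal, and not r.e., as claimed.
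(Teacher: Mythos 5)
Your overall architecture (a chain of finite leaf-extensions, values of the form $\la n, \alpha \ra$ with $\pi_1$ reserved for announcing nodes of the tree itself, and the closing observation that $\Psi(g) = \ran(\pi_1 \circ g)$ witnesses uniform e-pointedness) matches the paper's construction. But the heart of the lemma --- the $\MP{R}_e$ verification --- has a genuine gap. Your per-input dichotomy ``fire an axiom for $\la x, \cdot \ra$ or permanently forbid all potential axioms for $\la x, \cdot \ra$'' does not support the claimed recursive computation. First, the forbid alternative is not a legal move in your forcing: forbidding every string whose future presence could fire an axiom for $\la x, \cdot \ra$ is in general an \emph{infinite} negative commitment, which your conditions (finite level-complete trees) do not carry and which can conflict with keeping $A$ leafless and with the covering-level announcements; it is also unnecessary, since if no legal extension of the current condition fires $\la x, \cdot \ra$, then monotonicity of $\Psi_e$ already yields $\la x, \cdot \ra \notin \Psi_e(A)$ with no commitment at all. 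Second, and decisively: when $\Psi_e(A)$ is total, by your own account the forbid branch is never taken, so there are \emph{no} ``negative commitments below $q_e$'' available to ``block all spurious axioms'' --- indeed the per-$x$ strategy never produces a single distinguished condition $q_e$ in the first place. Your computation ``search for any legal finite tree $F \supseteq q_e$ with $\la x, y \ra \in \Psi_e(F)$ and output $y$'' is therefore unsound as it stands: two legal extensions of $q_e$ may fire $\la x, y \ra$ and $\la x, y' \ra$ with $y \neq y'$, and nothing in your argument forces the value you find by search to agree with the true value $f(x)$ computed along $A$.

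What is missing is exactly the single-valuedness forcing step that the paper performs \emph{once per $e$}, not once per $(e,x)$. Define \emph{good} extensions of $A_s$ (leaf-extensions disjoint from the current finite forbidden set $O_{s+1}$ and closed under announcements, i.e.\ $\ran(\pi_1 \circ \sigma) \subseteq S$ for all $\sigma \in S$), and ask the single $\Sigma^0_1$ question: is there a good extension $R$ with $\la m, n \ra, \la m, o \ra \in \Psi_e(R)$ for some $n \neq o$? If yes, adopt it, destroying functionhood of $\Psi_e(A)$ outright; if no, then the r.e.\ set $X = \{\la m, n \ra : \text{some good extension of $A_s$ fires } \la m, n \ra\}$ equals $\graph(f)$, where the correctness direction uses crucially that the \emph{union of two good extensions is again a good extension} (the paper checks that $A_t \cup B$ is good), so a fired wrong value together with the true firing inside $A$ would assemble into a double-value extension, contradicting the negative answer. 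This union-closure is available precisely because the only negative constraint is the finite set $O$; your design is repairable by adding this per-$e$ double-value question and verifying that your covering-level bookkeeping is canonical enough that unions of conditions remain conditions --- at which point your per-$x$ machinery becomes superfluous. (A minor further looseness: in $\MP{Q}_e$ you cannot ``decide membership opposite to monitored $W_e$-membership'' at a finite stage, as $n \in W_e$ is only $\Sigma^0_1$; do as the paper does, non-effectively --- if $W_e$ is infinite, forbid some $\sigma \in W_e \smallsetminus A_s$ via $O_{s+1}$, and if $W_e$ is finite then $A \neq W_e$ automatically since $A$ is infinite.)
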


\begin{proof}
For the purposes of this proof, we make the following definitions for finite
trees $T, S \subseteq \omega^{<\omega}$:
\begin{itemize}
\item $\leaves(T) = \{\sigma \in T : \text{$\sigma$ is a leaf of $T$}\}$.
\item $S$ \emph{leaf-extends} $T$ if $T \subseteq S$ and $(\forall \tau \in
    S \smallsetminus T)(\exists \sigma \in \leaves(T))(\sigma \subseteq
    \tau)$;

\item $S$ \emph{properly leaf-extends} $T$ if $S$ leaf-extends $T$ and
    $(\forall \sigma \in T)(\exists \tau \in S)(\sigma \subsetneq \tau)$.
\end{itemize}

We build a sequence of finite trees $A_0 \subseteq A_1 \subseteq A_2
\subseteq \dots$, where $A_{s+1}$ properly leaf-extends $A_s$ for each $s \in
\omega$. This way, $A = \bigcup_{s \in \omega} A_s$ has no leaves and is
finitely branching. Furthermore, we build the sequence so that
\begin{align*}
(\forall s
\in \omega)(\forall \sigma \in \leaves(A_{s+1}))(A_s \subseteq \ran(\pi_1
\circ \sigma) \subseteq A_{s+1}).
\end{align*}
This ensures that $\ran(\pi_1 \circ f) = A$ for every $f \in [A]$. To help
ensure that $A \gee \emptyset$, we also maintain a sequence of finite sets of
strings $O_0 \subseteq O_1 \subseteq O_2 \subseteq \dots$ such that $\forall
s(A_s \cap O_s = \emptyset)$.

We satisfy the requirements
\begin{align*}
\MP{Q}_e\colon& A \neq W_e\\
\MP{R}_e\colon& \text{if $\Psi_e(A)$ is the graph of a total
                            function $f$, then $f$ is recursive.}
\end{align*}

\medskip

\noindent \emph{Stage $0$}: set $A_0 = \{\emptyset\}$, and set $O_0 =
\emptyset$.

\medskip

\noindent \emph{Stage $s+1 = 2e+1$}: We satisfy $\MP{Q}_e$.  If $W_e$ is
finite, then set $O_{s+1} = O_s$.  If $W_e$ is infinite, then choose any
$\sigma \in W_e \smallsetminus A_s$, and set $O_{s+1} = O_s \cup \{\sigma\}$.
To extend $A_s$ to $A_{s+1}$, first choose $n$ greater than (the code of)
every element in $O_{s+1}$.  Then choose any enumeration $(\alpha_i)_{i < k}$
of $A_s$. Then let $\beta$ be the string $\la \la n, \alpha_0 \ra, \la n,
\alpha_1 \ra, \dots, \la n, \alpha_{k-1} \ra \ra$.  Now let $A_{s+1}$ be the
tree obtained by extending each leaf of $A_s$ by $\beta$:
\begin{align*}
A_{s+1} = \{\sigma : (\exists \tau \in \leaves(A_s))(\sigma \subseteq
\tau^\smf\beta)\}.
\end{align*}
Having chosen $n$ big enough, we have guaranteed that $A_{s+1}$ is disjoint
from $O_{s+1}$.

\medskip

\noindent \emph{Stage $s+1 = 2e+2$}: We satisfy $\MP{R}_e$. Set $O_{s+1} =
O_s$.  For finite trees $T, S \subseteq \omega^{<\omega}$, call $S$ a
\emph{good} extension of $T$ if $S$ leaf-extends $T$, $S \cap O_{s+1} =
\emptyset$, and $(\forall \sigma \in S)(\ran(\pi_1 \circ \sigma) \subseteq
S)$. Ask if there is a good extension $R$ of $A_s$ such that
\begin{align*}
(\exists m, n, o)(n \neq o \andd \la m, n \ra \in \Psi_e(R) \andd \la m, o \ra
\in \Psi_e(R)).
\end{align*}
If there is such an $R$, let $\widehat{A}_s = R$.  Otherwise, let
$\widehat{A}_s = A_s$. Now extend $\widehat{A}_s$ to $A_{s+1}$ the same way
that we extend $A_s$ to $A_{s+1}$ during the odd stages.  The fact that
$(\forall \sigma \in \widehat{A}_s)(\ran(\pi_1 \circ \sigma) \subseteq
\widehat{A}_s)$ ensures that $(\forall \sigma \in A_{s+1})(\ran(\pi_1 \circ
\sigma) \subseteq A_{s+1})$.  This completes the construction.

\medskip

Let $A = \bigcup_{s \in \omega} A_s$. We show that all requirements are
satisfied.

For requirement $\MP{Q}_e$, consider stage $s+1 = 2e+1$.  If $W_e$ is finite,
then $A \neq W_e$ because $A$ is infinite.  If $W_e$ is infinite, then at
stage $s+1$ we chose a $\sigma \in W_e \smallsetminus A_s$ and put $\sigma$ in
$O_{s+1}$.  Thus $\forall t (\sigma \notin A_t)$, so $\sigma \notin A$. Hence
$A \neq W_e$.

For requirement $\MP{R}_e$, suppose that $\Psi_e(A)$ is the graph of a total
function $f$, and consider stage $s+1 = 2e+2$.  We show that $\graph(f)$ is
r.e., which implies that $f$ is recursive. Let
\begin{align*}
X = \{\la m, n \ra : \text{there is a good extension $B$ of
        $A_s$ with $\la m, n \ra \in \Psi_e(B)$}\}
\end{align*}
(where here `good' means with respect to the $O_{s+1}$ at stage $s+1$).
Clearly $X$ is r.e.  We show that $X = \graph(f)$.  For $\graph(f) \subseteq
X$, suppose that $\la m, n \ra \in \graph(f) = \Psi_e(A)$.  Let $t \geq s+1$
be such that $\la m, n \ra \in \Psi_e(A_t)$.  Then $A_t$ is a good extension
of $A_s$ with $\la m, n \ra \in \Psi_e(A_t)$, so $\la m, n \ra \in X$.
Conversely, suppose that $\la m, n \ra \in X$, and let $B$ be a good
extension of $A_s$ with $\la m, n \ra \in \Psi_e(B)$.  If $\la m, n \ra
\notin \graph(f)$, then $\la m, o \ra \in \graph(f) = \Phi_e(A)$, where $o =
f(m) \neq n$.  Let $t \geq s+1$ be such that $\la m, o \ra \in \Psi_e(A_t)$.
Then $A_t$ is a good extension of $A_s$, and, moreover, $A_t \cup B$ is also
a good extension of $A_s$.  Thus there is a good extension $R = A_t \cup B$
of $A_s$ such that $n \neq o \andd \la m, n \ra \in \Psi_e(R) \andd \la m, o
\ra \in \Psi_e(R)$, for some  $m, n, o \in \omega$.  Therefore, at stage
$s+1$, we extended $A_s$ to an $A_{s+1}$ such that
\begin{align*}
(\exists m, n, o)(n \neq o \andd
\la m, n \ra \in \Psi_e(A_{s+1}) \andd \la m, o \ra \in \Psi_e(A_{s+1})).
\end{align*}
This contradicts that $\Psi_e(A)$ is the graph of a function.

All together, we have that $A$ has no leaves, that $\ran(\pi_1 \circ f) = A$
for every $f \in [A]$ by construction, and that $A$ is quasiminimal by the
$\MP{Q}_e$ requirements and the $\MP{R}_e$ requirements.
\end{proof}

\begin{Theorem}\label{thm:qm-compact}
There is a degree of enumerability $\md{E}_A$ that is both quasiminimal and
compact. Hence $\md{E}_A$ is closed, nonzero, and does not bound any nonzero
degree of solvability.
\end{Theorem}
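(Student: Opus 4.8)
The plan is to take the tree $A$ produced by Lemma~\ref{lem-SelfEnumTree} and verify that its degree of enumerability $\md{E}_A$ enjoys the two stated properties; the concluding ``Hence'' sentence will then follow immediately from results already in hand. First I would note that $A$, regarded as a subset of $\omega$ under the standard coding of $\omega^{<\omega}$, is quasiminimal by the conclusion of Lemma~\ref{lem-SelfEnumTree}. By definition, this makes $\MP{E}_A$ a quasiminimal problem of enumerability, and hence $\md{E}_A$ a quasiminimal degree of enumerability.

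Second, to obtain compactness I would invoke Lemma~\ref{lem-EnumCompact}, which says that $\md{E}_A$ is compact precisely when $A \equive T$ for some uniformly e-pointed tree $T \subseteq \omega^{<\omega}$ w.r.t.\ functions. The key observation is that I may take $T = A$ itself. Indeed, Lemma~\ref{lem-SelfEnumTree} produces $A$ as a finitely branching tree with no leaves satisfying $\ran(\pi_1 \circ f) = A$ for every $f \in [A]$, and the remark closing that lemma records that such a tree is uniformly e-pointed w.r.t.\ functions. Since $A \equive A$ trivially, Lemma~\ref{lem-EnumCompact} yields that $\md{E}_A$ is compact.

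For the final sentence, compactness gives closedness at once, since every compact subset of Baire space is closed. That $\md{E}_A$ is nonzero and bounds no nonzero degree of solvability I would deduce from the quasiminimality of $\MP{E}_A$ via Lemma~\ref{lem:withoutproof}: that lemma supplies $\md 0 \les \md{E}_A$ together with $\{f\} \nleqs \MP{E}_A$ for every nonrecursive total $f$. Translating through the embedding of the degrees of solvability, the latter says exactly that $\degs(\{f\}) \nleqs \md{E}_A$ for each nonzero degree of solvability $\degs(\{f\})$, which is the desired non-bounding statement.

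There is no genuine obstacle remaining, as all the difficulty was absorbed into the priority construction of Lemma~\ref{lem-SelfEnumTree} and the characterisation in Lemma~\ref{lem-EnumCompact}. The only point worth emphasising is that the single tree $A$ plays a double role: it is simultaneously the quasiminimal set whose problem of enumerability we examine and the uniformly e-pointed tree witnessing compactness. Assembling these two observations is all that the proof requires.
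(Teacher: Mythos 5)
Your proposal is correct and takes essentially the same route as the paper: both let $A$ be the tree of Lemma~\ref{lem-SelfEnumTree}, conclude quasiminimality of $\md{E}_A$ directly from the quasiminimality of $A$, and obtain compactness from Lemma~\ref{lem-EnumCompact} with $A$ serving as its own uniformly e-pointed witness (the ``double role'' you highlight). Your explicit derivation of the concluding sentence via Lemma~\ref{lem:withoutproof} merely spells out what the paper leaves implicit.
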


\begin{proof}
Let $A$ be the tree from Lemma~\ref{lem-SelfEnumTree}. Then $A$ has
quasiminimal e-degree, so $\md{E}_A$ is quasiminimal by definition.
Furthermore, $A$ is uniformly e-pointed w.r.t.\ functions, so $\md{E}_A$
is compact by Lemma~\ref{lem-EnumCompact}.
\end{proof}

\begin{Remark}
In Lemma~\ref{lem-SelfEnumTree}, one can make the tree $A$ be not cototal by
a small modification to the proof.  Thus although every uniformly e-pointed
tree w.r.t.\ functions has cototal \emph{e-degree} by
Proposition~\ref{prop-ePointedCototal}, it is not the case that every such
tree is cototal as a set.

To modify the proof, replace each old $\MP{Q}_e$ requirement $A \neq W_e$
with the new requirement $A \neq \Psi_{e}(\overline{A})$.  (Notice that a set
$A$ satisfying all of the new $\MP{Q}_e$ requirements is still not r.e.,
which is required in order for a set $A$ to be quasiminimal.)  To satisfy the
new $\MP{Q}_e$, modify stage $s+1 = 2e+1$ as follows.  If there are a finite
set $D \subseteq \overline{A_s}$ and a string $\sigma \in \overline{A_s}$
with $\sigma \in \Psi_e(D)$, then choose such a $D$ and $\sigma$, and set
$O_{s+1} = O_s \cup D \cup \{\sigma\}$.  Otherwise simply set $O_{s+1} =
O_s$. Then choose $n$ greater than (the code of) every element in $O_{s+1}$,
and extend $A_s$ to $A_{s+1}$ as before.  To verify that $\MP{Q}_e$ is
satisfied, suppose for a contradiction that $\Psi_e(\overline{A}) = A$.  As
$A$ is infinite and $A_s$ is finite, fix some $ \sigma \in A \setminus A_s$.
Let $D \subseteq \overline{A} \subseteq \overline{A_s}$ be a finite set such
that $\sigma \in \Phi_e(D)$.  Then, at stage $s+1$, we were able to choose a
$D$ and $\sigma$, ensuring that $\Phi_e(\overline{A}) \neq A$. This is a
contradiction.
\end{Remark}

\subsection*{A degree of enumerability that does not bound any nonzero
closed degree}

Finally, we show that there are examples of nonzero degrees of enumerability
that do not bound nonzero closed degrees. Such examples are of course
quasiminimal, and indeed the property of being nonzero but not above any
nonzero closed degree can be viewed as an interesting generalization of
quasiminimality. Theorem~\ref{thm:no-bounding} below can also be phrased by
saying that there are nonzero degrees of enumerability that do not lie in the
filter generated by the nonzero closed degrees, which coincides with the
collection of all Medvedev degrees bounding nonzero closed degrees
(see~\cite{Sorbi-filters}).

\begin{Lemma}\label{lem-Prob9Helper}
There is a set $A \gee \emptyset$ such that, for all $T \leqe A$, if $T$ is a
subtree of $\omega^{<\omega}$ with no leaves, then $T$ has an r.e.\ subtree
with no leaves.
\end{Lemma}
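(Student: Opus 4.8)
The plan is to build $A$ by a priority construction that diagonalizes against every r.e.\ set (to get $A \gee \emptyset$) while, for every enumeration operator $\Psi_e$, manufacturing a candidate r.e.\ subtree of $\Psi_e(A)$. I would use the requirements
\[
\MP{N}_e\colon A \neq W_e, \qquad \MP{R}_e\colon \text{if } \Psi_e(A) \text{ has no leaves then it has an r.e.\ subtree with no leaves,}
\]
arranged as $\MP{R}_0, \MP{N}_0, \MP{R}_1, \MP{N}_1, \dots$, so that each requirement sits below only finitely many others. The set $A$ is assembled from a non-monotone sequence of finite positive commitments, and ``$x \in A$'' means that $x$ is eventually permanently committed.

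The heart is the strategy for $\MP{R}_e$, which I would run as a self-contained computable subprocess so that the tree it enumerates is genuinely r.e. Writing $T = \Psi_e(A)$, it maintains a finite set of certified nodes, where $\sigma$ is certified once the process has found an axiom $\langle D, \sigma\rangle \in \Psi_e$ and committed $D$ into $A$; since the commitments of its final incarnation are permanent we get $D \subseteq A$ and hence $\sigma \in \Psi_e(A) = T$. The enumerated tree $S_e$ is the downward closure of the certified nodes, which is automatically a subtree of $T$ since $T$ is downward closed. The process repeatedly attends to each certified frontier node $\sigma$, dovetailing a search through $\Psi_e$ for an axiom $\langle D, \tau\rangle$ with $\tau \supsetneq \sigma$ and $D$ disjoint from the finite restraint $R$ imposed by higher-priority requirements; on success it commits $D$ and adds $\tau$, extending $\sigma$ inside $S_e$.

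Everything then turns on a single dichotomy, applied once the environment of $\MP{R}_e$ has stabilized (so that $R$ is final and $A \cap R = \emptyset$). If $T$ has no leaves, then each certified $\sigma \in T$ has a proper extension $\tau' \in T$, witnessed by an axiom $\langle D', \tau'\rangle \in \Psi_e$ with $D' \subseteq A$ and hence $D' \cap R = \emptyset$; as the search dovetails over all of $\Psi_e$ forever it must eventually find a qualifying axiom, so $\sigma$ is extended, and therefore $S_e$ has no leaves. If instead the search for some frontier $\sigma$ is permanently stuck, then $\Psi_e$ contains no axiom for a proper extension of $\sigma$ whose finite set is disjoint from $R$; since $A \cap R = \emptyset$ every $D' \subseteq A$ has $D' \cap R = \emptyset$, so $\sigma$ has no proper extension in $T$, meaning $T$ has a leaf and $\MP{R}_e$ holds vacuously. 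In the first case $S_e$ is the required r.e.\ subtree. The strategy for $\MP{N}_e$ is the usual one: reserve a fresh witness $x$, commit it to $A$, and watch $W_e$; if $x$ never enters $W_e$ then $x \in A \setminus W_e$, and if it does then remove $x$ and restrain it out so that $x \in W_e \setminus A$. These removals are the non-monotone steps that keep $A$ from being r.e., yielding $A \gee \emptyset$.

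The main obstacle is the priority bookkeeping reconciling two opposing demands: each $\MP{R}_e$'s certifying commitments must eventually become permanent (so that $S_e$ is a subtree of the final $T$ and the stuck-implies-leaf analysis refers to the final $A$), while the removals demanded by the $\MP{N}_e$ must still be allowed so as to keep $A$ non-r.e. I would resolve this in the standard fashion: each requirement respects the restraints of all higher-priority requirements and injures only lower ones, and one argues that every requirement is injured only finitely often, so each $\MP{R}_e$ has a final injury-free incarnation whose commitments are permanent. The step I expect to demand the most care is bounding the injuries a higher-priority $\MP{R}_{e'}$, which commits infinitely many finite sets over the whole construction, can inflict on a fixed lower $\MP{N}_e$ by committing its restrained witness; here one must exploit the freshness of $\MP{N}_e$'s successive witnesses together with a judicious choice---among the axioms available for each extension---of which finite set $\MP{R}_{e'}$ commits, so as to guarantee that each injury set is finite and the construction converges.
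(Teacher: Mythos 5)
There is a genuine gap, and it sits precisely in the ``priority bookkeeping'' that you defer at the end: it is not a routine detail but the point where this approach breaks. Your $\MP{R}_e$-strategy is \emph{eager}---it extends its certified tree whenever it finds an axiom whose finite set avoids the (finite) higher-priority restraint---and it has no way to decide when to stop, because whether it \emph{should} keep extending depends on whether $\Psi_e(A)$ has leaves, a question about the final $A$. Consider the operator $\Psi = \{\la 0^n, \{0,\dots,n-1\}\ra : n \in \omega\}$, so that $\Psi(X)$ is the chain $\{0^n : \{0,\dots,n-1\} \subseteq X\}$. If this operator has highest priority (so its restraint environment is empty), then each extension step has a \emph{unique} available axiom, so the ``judicious choice among the axioms'' you invoke does not exist: your strategy must commit $\{0,\dots,n-1\}$ for every $n$, forcing $A = \omega$. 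Every lower-priority $\MP{N}_e$ is then injured infinitely often (each fresh witness is eventually committed), $A$ is r.e., and $A \gee \emptyset$ fails. Nor can the strategy hold back, since refraining is only correct when $T$ has a leaf, and making a node $\sigma$ a leaf of $\Psi_e(A)$ is an \emph{infinite} negative condition: $\Psi_e$ may contain infinitely many axioms $\la D', \tau'\ra$ with $\tau' \supsetneq \sigma$ and pairwise different $D'$, and keeping all of them out of $A$ requires confining $A$ inside a coinfinite recursive set---a co-restraint that a finite-restraint priority framework has no mechanism to impose.

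The paper's proof avoids injury entirely by replacing the incremental strategy with one-shot forcing between recursive bounds: it maintains $J_s \subseteq^+ \alpha_s^+ \subseteq I_s$ with $I_s, J_s$ recursive and $I_s \smallsetminus J_s$ infinite, and for $\MP{R}_e$ asks \emph{once} whether there are a finite extension $\beta \supseteq \alpha_s$ and a recursive $R$ with $\beta^+ \subseteq R \subseteq I_s$ such that some $\sigma \in \Psi_e(\beta^+)$ is a leaf of $\Psi_e(R)$. If yes, it sets $I_{s+1} = R$, so $\beta^+ \subseteq A \subseteq R$ and by monotonicity $\sigma$ is a leaf of $\Psi_e(A)$---the requirement is vacuous. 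If no, it fixes a recursive $J_{s+1} \subseteq A$ (with $I_{s+1} \smallsetminus J_{s+1}$ infinite, preserving room to diagonalize); then $\Psi_e(J_{s+1})$ is r.e.\ because $J_{s+1}$ is recursive, is contained in $\Psi_e(A)$ by monotonicity, and is leafless, since a leaf of $\Psi_e(J_{s+1})$ would have produced witnesses $\beta$ and $R = J_{s+1}$ for the ``yes'' answer. The idea you are missing is that the r.e.\ subtree need not be assembled node by node during the construction at all: it can simply be taken as $\Psi_e(J)$ for a recursive $J \subseteq A$, with the dichotomy decided once and non-effectively (the lemma only asserts the \emph{existence} of $A$, so the construction need not be computable). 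Each requirement then acts exactly once and is never injured. (A minor further point: the paper first normalizes the list of operators so that $\Psi_e(X)$ is a tree for every $X$, which your downward-closure device only partially replicates.)
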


\begin{proof}
For the purposes of this proof, we assume that if $\Psi$ is an enumeration
operator, $X \subseteq \omega$, and $\Psi(X)$ enumerates some $\sigma \in
\omega^{<\omega}$ (i.e., $\sigma \in \Psi(X)$), then it also enumerates all
$\tau \subseteq \sigma$.   In fact, from any enumeration operator $\Gamma$,
one can effectively produce an enumeration operator $\Psi$ such that, for all
$X$,
\begin{itemize}
\item $\Psi(X)$ is a tree, and
\item if $\Gamma(X)$ is a tree, then $\Psi(X) = \Gamma(X)$.
\end{itemize}
To accomplish this, just take
$\Psi=\{\la \tau, D \ra : (\exists \sigma)(\tau \subseteq \sigma
\andd \la \sigma , D \ra \in \Gamma)\}$.  Therefore, we can define an
effective list $(\Psi_e : e \in \omega)$ of enumeration operators such that
\begin{itemize}
\item $\Psi_e(X)$ is a tree for every $e$ and $X$, and
\item if $T \leqe X$ for a tree $T$ and set a $X$, then there is an $e$
    such that $\Psi_e(X) = T$.
\end{itemize}
Also, recall the notation $g^+ = \{n : g(n) = 1\}$ from
Definition~\ref{def-plus}. We extend this notation to strings $\alpha \in
2^{< \omega}$ by defining $\alpha^+ = \{i < |\alpha| : \alpha(i)=1\}$.
Additionally, if $A \subseteq \omega$ and $\alpha \in 2^{<\omega}$, we write
$A \subseteq^+ \alpha^+$ to mean that $(\forall n < |\alpha|)(n \in A \imp
\alpha(n) = 1)$ (i.e., $\{n \in A: n < |\alpha|\} \subseteq \alpha^+$).

We satisfy the requirements
\begin{align*}
\MP{Q}_e\colon& A \neq W_e\\
\MP{R}_e\colon& \text{either $\Psi_e(A)$ contains a
leaf or there is an r.e.\ $T \subseteq \Psi_e(A)$ with no leaves.}
\end{align*}

We build a sequence of binary strings $\alpha_0 \subseteq \alpha_1 \subseteq
\alpha_2 \subseteq \dots$ along with sequences of recursive sets $I_0
\supseteq I_1 \supseteq I_2 \supseteq \dots$ and $J_0 \subseteq J_1 \subseteq
J_2 \subseteq \dots$ such that, for every $s \in \omega$, $I_s \smallsetminus
J_s$ is infinite and $J_s \subseteq^+ \alpha_s^+ \subseteq I_s$.  In the end,
we let $A = \bigcup_{s \in \omega}\alpha_s^+$, and we have $\bigcup_{s \in
\omega} J_s \subseteq A \subseteq \bigcap_{s \in \omega} I_s$.

\medskip

\noindent \emph{Stage $0$}: Set $\alpha_0 = \emptyset$, set $I_0 = \omega$,
and set $J_0 = \emptyset$.

\medskip

\noindent \emph{Stage $s+1 = 2e+1$}:  We satisfy $\MP{Q}_e$.  Let $n \in I_s
\smallsetminus J_s$ be least such that $n > |\alpha_s|$. If $n \in W_e$, set
$I_{s+1} = I_s \smallsetminus \{n\}$, set $J_{s+1} = J_s$, and extend
$\alpha_s$ to an $\alpha_{s+1}$ with $J_s \subseteq^+ \alpha_{s+1}^+
\subseteq I_s$ and $\alpha_{s+1}(n) = 0$.  If $n \notin W_e$, set $I_{s+1} =
I_s$, set $J_{s+1} = J_s$, and extend $\alpha_s$ to an $\alpha_{s+1}$ with
$J_s \subseteq^+ \alpha_{s+1}^+ \subseteq I_s$ and $\alpha_{s+1}(n) = 1$.

\medskip

\noindent \emph{Stage $s+1 = 2e+2$}: We satisfy $\MP{R}_e$.  Ask if there is
a $\beta \supseteq \alpha_s$ and a recursive set $R$ such that
\begin{itemize}
\item $J_s \subseteq^+ \beta^+ \subseteq R \subseteq I_s$,
\item $R \smallsetminus J_s$ is infinite, and
\item there is a $\sigma \in \Psi_e(\beta^+)$ that is a leaf in
    $\Psi_e(R)$.
\end{itemize}
If there are such $\beta$ and $R$, set $\alpha_{s+1} = \beta$, set $I_{s+1} =
R$, and set $J_{s+1} = J_s$.  If there are no such $\beta$ and $R$, then set
$\alpha_{s+1} = \alpha_s$, set $I_{s+1} = I_s$, and choose any recursive
$J_{s+1}$ whose characteristic function extends $\alpha_{s+1}$, $J_s
\subseteq J_{s+1} \subseteq I_{s+1}$, and $J_{s+1} \smallsetminus J_s$ and
$I_{s+1} \smallsetminus J_{s+1}$ are both infinite. This completes the
construction.

\medskip

Let $A = \bigcup_s \alpha_s^+$.  The $\MP{Q}_e$ requirements are clearly
satisfied, and together they ensure that $A$ is not r.e.  Hence $A \gee
\emptyset$.

Now suppose that $T \leqe A$ is a tree with no leaves, and let $\Psi_e$ be
such that $T = \Psi_e(A)$.  At stage $s+1 = 2e+2$, there must not have been a
$\beta$ and an $R$ because if there were, then we would have $\beta =
\alpha_{s+1}$ and $\beta^+ \subseteq A \subseteq R = I_{s+1}$, so there would
be a leaf $\sigma \in \Psi_e(A) = T$. It must therefore be that
$\Psi_e(J_{s+1})$ is a tree with no leaves. To see this, suppose instead that
$\Psi_e(J_{s+1})$ has a leaf $\sigma$. Let $\beta$ be such that $\alpha_{s+1}
\subseteq \beta$, $\beta^+ \subseteq J_{s+1}$, and $\sigma \in
\Psi_e(\beta^+)$. Then at stage $s+1$, we could have taken $\beta$ and $R =
J_{s+1}$, which is a contradiction. This finishes the proof because
$\Psi_e(J_{s+1}) \subseteq T$ since $J_{s+1} \subseteq A$, and
$\Psi_e(J_{s+1})$ is r.e.\ since $J_{s+1}$ is recursive.
\end{proof}

\begin{Theorem}\label{thm:no-bounding}
There is a nonzero degree of enumerability that does not bound a nonzero
closed degree.
\end{Theorem}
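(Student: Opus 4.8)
The plan is to let $A$ be the set produced by Lemma~\ref{lem-Prob9Helper} and to show that its degree of enumerability $\md{E}_A$ witnesses the theorem. First I would verify that $\md{E}_A$ is nonzero. Since $A \gee \emptyset$, the set $A$ is not r.e., so $\MP{E}_A$ has no recursive member (a recursive member would be a recursive function with range $A$, making $A$ r.e.); hence $\md{E}_A \neq \md 0$. Note also that $A$ is infinite, being non-r.e., so $\MP{E}_A \neq \emptyset$ and $\md{E}_A$ is a genuine degree. It then remains to show that $\md{E}_A$ bounds no nonzero closed degree, i.e., that whenever $\MP C \subseteq \omega^\omega$ is closed with $\MP C \leqs \MP{E}_A$, the problem $\MP C$ has a recursive member, so that $\degs(\MP C) = \md 0$.

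So suppose $\MP C$ is closed and $\MP C \leqs \MP{E}_A$. The first key step is to invoke Lemma~\ref{lem-EnumTree} to obtain a tree $T \subseteq \omega^{<\omega}$ with no leaves such that $T \leqe A$ and $[T] \subseteq \MP C$; here $T$ is nonempty, as it contains $\emptyset$, and therefore $[T] \neq \emptyset$. The second key step is to feed this $T$ into Lemma~\ref{lem-Prob9Helper}: because $T \leqe A$ is a subtree of $\omega^{<\omega}$ with no leaves, $T$ has an r.e.\ subtree $S$ with no leaves, and $S$ is nonempty since $T$ is.

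The final step is to extract a recursive member of $\MP C$ from $S$. Since $S$ is a nonempty r.e.\ tree with no leaves, I would build a path $f \in [S]$ by a greedy computable traversal: start from $\emptyset \in S$, and given $f\!\restriction\!n \in S$, search the enumeration of $S$ for some $\tau \supsetneq f\!\restriction\!n$ with $\tau \in S$ and set $f(n) = \tau(n)$. This search always halts because $f\!\restriction\!n$, lying in $S$, is not a leaf of $S$; and $f\!\restriction\!(n+1) = \tau\!\restriction\!(n+1) \in S$ since $S$ is a tree. The resulting $f$ is recursive and satisfies $f \in [S] \subseteq [T] \subseteq \MP C$. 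Thus $\MP C$ has a recursive member, so $\degs(\MP C) = \md 0$, as required.

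The substance of the argument is packaged inside Lemma~\ref{lem-Prob9Helper}; once that lemma is available, the theorem is an assembly of it with Lemma~\ref{lem-EnumTree}. The one genuine point of care is the nonemptiness of the r.e.\ subtree $S$, which the greedy traversal needs in order to have a node from which to start. This I would secure by noting that the tree $T$ produced by Lemma~\ref{lem-EnumTree} is nonempty, and that the convention on enumeration operators set up in the proof of Lemma~\ref{lem-Prob9Helper} may be taken to force $\emptyset \in \Psi_e(X)$ for every $X$ while still representing every nonempty tree $T \leqe A$ as some $\Psi_e(A)$; then a nonempty $T = \Psi_e(A)$ yields a nonempty $S = \Psi_e(J_{s+1})$. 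With nonemptiness in hand, the computable extraction of a path from a leafless r.e.\ tree is routine.
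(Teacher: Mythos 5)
Your proof is correct and follows the paper's argument exactly: take $A$ from Lemma~\ref{lem-Prob9Helper}, apply Lemma~\ref{lem-EnumTree} to a closed $\MP{C} \leqs \MP{E}_A$ to obtain a leafless tree $T \leqe A$ with $[T] \subseteq \MP{C}$, pass to an r.e.\ leafless subtree $S$ via Lemma~\ref{lem-Prob9Helper}, and computably thread a path through $S$. Your attention to the nonemptiness of $S$ is a genuine (if minor) point that the paper's phrase ``being a tree with no leaf, $S$ has a recursive path'' passes over---the construction's convention only guarantees $\emptyset \in \Psi_e(X)$ when $\Psi_e(X) \neq \emptyset$, and $J_{s+1} \subseteq A$ alone does not force $\Psi_e(J_{s+1}) \neq \emptyset$---and your fix, hard-wiring the axiom $\la \emptyset, \emptyset \ra$ into each operator so that $\Psi_e(X)$ is always a nonempty tree while still representing every nonempty tree $T \leqe A$, is exactly the right repair.
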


\begin{proof}
Let $A$ be as in Lemma~\ref{lem-Prob9Helper}.  Consider a closed $\MP C \leqs
\MP{E}_A$.  By Lemma~\ref{lem-EnumTree}, there is a tree $T \leqe A$ with no
leaves such that $[T] \subseteq \MP C$.  By Lemma~\ref{lem-Prob9Helper}, $T$
has an r.e.\ subtree $S$ with no leaves. Thus $[S] \subseteq [T] \subseteq
\MP C$.  However, being a tree with no leaf, $S$ has a recursive path, so $\MP C$
has a recursive member, so $\degs(\MP C) = \md 0$.
\end{proof}

\section*{Acknowledgements}
We thank Douglas Cenzer, Antonio Montalb\'an, Alexandra A.\ Soskova, and
Mariya I.\ Soskova for helpful comments and discussions.

\bibliographystyle{amsplain}
\bibliography{ShaferSorbiEnumVsClosed}

\end{document}